\documentclass[10pt,a4paper]{article}
\usepackage{authblk}

\usepackage{amsmath}
\usepackage{amsfonts}
\usepackage{amssymb}
\usepackage{amsthm}
\usepackage{ulem}
\usepackage{xcolor}
\usepackage{verbatim}
\usepackage{dsfont}
\numberwithin{equation}{section}

\newtheorem{thm}{Theorem}[section]
\newtheorem{pro}[thm]{Proposition}
\newtheorem{lem}[thm]{Lemma}
\newtheorem{cor}[thm]{Corollary}

\theoremstyle{Definition}
\newtheorem{dfn}[thm]{Definition}

\newtheorem{rem}[thm]{Remark}
\theoremstyle{plain}

\begin{document}

\title{Mixing properties and the chromatic number \\  of Ramanujan complexes}
\author{Shai Evra\thanks{E-mail address: \texttt{shai.evra@mail.huji.ac.il}} , 
Konstantin Golubev\thanks{E-mail address: \texttt{kost.golubev@mail.huji.ac.il}} ,
Alexander Lubotzky\thanks{E-mail address: \texttt{alex.lubotzky@mail.huji.ac.il}} }
\affil{Institute of Mathematics \\ Hebrew University \\ Jerusalem 91904  ISRAEL}
\maketitle

\begin{center}
\textit{Dedicated to Nati Linial on his 60th birthday.}
\end{center}

\begin{abstract}
Ramanujan complexes are high dimensional simplical complexes generalizing Ramanujan graphs.  
A result of Oh on quantitative property $(T)$ for Lie groups over local fields is used to deduce a Mixing Lemma for such complexes. 
As an application we prove that non-partite Ramanujan complexes have 'high girth' and high chromatic number, generalizing a well known result about Ramanujan graphs. 
\end{abstract}


\section{Introduction}

In 1959, Erd\H{o}s \cite{E} used random methods to show that there are graphs with arbitrary large girth and arbitrary large chromatic number. 
In a way this is a surprising fact, since large girth means that such a graph looks locally like a tree, and so locally its chromatic number is two, 
while globally it requires a large number of colors. A constructive proof was given by Lov\'{a}sz \cite{Lo} in 1968, 
and explicit examples (with quantitative estimates) in 1988 by Lubotzky, Phillips, Sarnak \cite{LPS} using Ramanujan graphs. 
This is still by no mean an easy result even by nowadays standards.  

The goal of this paper is to extend the above from Ramanujan graphs to high-dimensional Ramanujan complexes, as defined and constructed in \cite{LSV1} and \cite{LSV2}.
One is facing the immediate question what we mean by "girth" and "chromatic number" for simplicial complexes?

The girth $g(X)$ of a graph $X$ is equal to twice its injectivity radius $r(X)$ (more precisely $r(X) = \lfloor \frac{g(X) - 1}{2} \rfloor$). 
The injectivity radius of $X$ is the maximal $r \in \mathbb{N}$ such that if $\pi : \tilde{X} \rightarrow X$ is the universal cover map, then for every $y \in \tilde{X}$,
$\pi$ is one-to-one on the ball of radius $r$ around $y$. This definition is easily extended to finite simplicial complexes
and, in particular, to the Ramanujan complexes, whose universal covers are the Bruhat-Tits buildings. 
The injectivity radius is defined similarly with respect to the graph metric on the $1$-skeletons of $\tilde{X}$ and $X$.
This notion has been studied in \cite{LuMe} where it was shown that there exist Ramanujan complexes of "large girth" in this sense.
See Proposition 3.3 there and Corollary \ref{inj-rad} below.

Before moving to the chromatic number, let us make the following warning:
Every simplicial complex $X$ can be considered as a hypergraph $H$, when we take the maximal simplices (facets) of $X$ to be the edges of $H$. 
Moreover if $X$ is pure, i.e. all its facets are of the same dimension, say $d-1$,
then $H$ is a $d$-uniform hypergraph, i.e. all of its edges are of size $d$.
The commonly used notion of girth in the theory of hypergraphs is different then the one we are using here; it refers to the length of a minimal sequence of the form 
$x_1,E_1,x_2,E_2,\ldots,x_g,E_g,x_{g+1}$ where all $x_1,\ldots,x_g$ are different vertices, $x_{g+1}=x_1$, $E_1,\ldots,E_g$ are edges
and for any $i=1,\ldots,g$, $\{ x_i,x_{i+1} \} \subset E_i$.
This notion is not suitable for the Ramanujan complexes or any clique complex: 
any two facets with a common $1$-codimension wall give girth $2$ in this definition (so, even the building has girth $2$).
Anyway, the theory of hypergraphs of high girth and high chromatic number has been developed quite intensively.
See \cite{N} for a nice survey. The reader is referred also to (\cite{LuMe}, \cite{G1} and \cite{G2}) for related notions of girth for simplicial complexes,
based on local acyclicity.

The notion of chromatic number for simplicial complexes we will use is the same as the one commonly used for hypergraphs.
Let $X$ be a $(d-1)$-dimensional simplicial complex with a set of vertices $V$.
\begin{dfn} \label{dfn-cor}
The chromatic number of $X$, denoted $\chi(X)$, is the minimal number of colors needed to color the vertices of $X$, so that no facet (i.e. maximal face) is monochromatic.
\end{dfn}
Clearly $\chi(X)$ is bounded above by the chromatic number of the graph $X^{(1)}$(= the 1-skeleton of $X$).

Let us now recall what are Ramanujan complexes and how they are constructed:
Let $F$ be the local non-archimedean field $\mathbb{F}_{q}((t))$, i.e. the field of Laurent power series over $\mathbb{F}_q$,
where $\mathbb{F}_{q}$ is the finite field of order $q$. Let $\mathcal{B}=\mathcal{B}_{d}(F)$
be the Bruhat-Tits building associated with $PGL_{d}(F)$. It is an infinite $(d-1)$-dimensional countable simplicial complex, whose
vertices come naturally with types in $\mathbb{Z} / d\mathbb{Z}$, denoted $\tau: \mathcal{B}(0) \rightarrow \mathbb{Z} / d\mathbb{Z}$ (see \cite{LSV1}
and the references therein) in such a way that in every $(d-1)$-face all vertices are of different types. In particular, its chromatic
number is at most $d$. (Even its 1-skeleton has chromatic number $d$.) In fact, its chromatic number is 2, since we can divide the
set of $d$ types $\mathbb{Z} / d\mathbb{Z}$ into two non-empty disjoint sets and then using only two colors, we get that no $(d-1)$-cell
is monochromatic. If $\Gamma$ is a cocompact lattice in $G=PGL_{d}(F)$, with $dist(\Gamma) := \min_{1 \ne \gamma \in \Gamma, x \in \mathcal{B}}dist(\gamma.x,x) \geq 2$, 
then $\Gamma\backslash\mathcal{B}$ is a finite simplicial complex. 
If $\Gamma$ preserves the types of the vertices of $\mathcal{B}$, $\Gamma\backslash\mathcal{B}$ is also $d$-colorable (and even 2-colorable). 
The injectivity radius of $\Gamma \backslash \mathcal{B}$ is $\lfloor \frac{dist(\Gamma)-1}{2} \rfloor$.

We will use the remarkable lattice $\Lambda$ constructed by Cartwright and Steger \cite{CS}, which acts transitively on the vertices of $\mathcal{B}$, and, 
in particular, does not preserve the types of the vertices of $\mathcal{B}$. In this $\Lambda$ we will choose suitable congruence
subgroups $\Lambda(f)$ for some $f \in \mathbb{F}_{q}[\frac{1}{t}]$, and show:

\begin{thm} \label{chromatic-bound} 
For every integer $d \geq 3$ and odd prime power $q$, there exists a sequence of finite $(d-1)$-dimensional simplicial complexes $(X_n)_{n \in \mathbb{N}}$ 
with $|X_n| \rightarrow \infty$, covered by $\mathcal{B}_{d}(\mathbb{F}_{q}((t)))$, with injectivity radius
\begin{equation}\nonumber
r(X_n) \geq \frac{\log_q|X_n|}{2(d-1)(d^2-1)} - \frac{1}{2}
\end{equation}
(so, the chromatic number of every ball of radius $\frac{\log_q|X_n|}{2(d-1)(d^2-1)} - \frac{1}{2}$ is two), while 
\begin{equation}\nonumber
 \chi(X_n)\geq \frac{1}{2} \cdot q^{\frac{1}{2d}} 
\end{equation}
and so, $\chi(X_n)\to\infty$ when $q\to\infty$. In particular, by letting $q \rightarrow \infty$, this gives for every $d \geq 3$, 
$(d-1)$-dimensional simplicial complexes of arbitrary large "girth" (twice the injectivity radius) and arbitrary large chromatic number.
\end{thm}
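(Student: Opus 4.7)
The plan is to set $X_n = \Lambda(I_n)\backslash \mathcal{B}$, where $\mathcal{B}=\mathcal{B}_d(\mathbb{F}_q((t)))$, $\Lambda$ is the Cartwright--Steger lattice (acting transitively on the vertex set of $\mathcal{B}$ and therefore failing to preserve types), and $I_n = (f_n)$ is a congruence ideal in $\mathbb{F}_q[1/t]$ with $\deg f_n \to \infty$. By the Lubotzky--Samuels--Vishne construction, the quotients $X_n$ are Ramanujan complexes covered by $\mathcal{B}$. Strong approximation identifies $\Lambda/\Lambda(I_n)$ with a finite group of $\mathbb{F}_q[1/t]/I_n$-points, giving $|X_n| \asymp q^{(d^2-1)\deg f_n}$; on the other hand, a direct reduction-mod-$f_n$ argument bounds $\operatorname{dist}(\Lambda(I_n)) \geq \deg f_n$ in the Bruhat--Tits metric. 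Converting from the chamber metric to the $1$-skeleton metric introduces a factor $d-1$ and yields the stated injectivity radius bound, in the spirit of Proposition 3.3 of \cite{LuMe}.

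For the chromatic number, I would argue by contradiction via the Mixing Lemma established earlier in the paper. If $\chi(X_n) = k$, pigeonhole produces a color class $A \subseteq V(X_n)$ of size at least $|V(X_n)|/k$ containing no facet. Applying the Mixing Lemma with $A_1=\cdots=A_d=A$, the number of facets with all vertices in $A$ is bounded below by a main term of order $|A|^d/|V(X_n)|^{d-1}$ (times the total facet density) minus an error term controlled by the Ramanujan spectral bound, which scales as $q^{(d-1)/2}$. Setting this count to zero and solving for $|A|$ gives
\begin{equation*}
|A| \;\leq\; 2\,|V(X_n)|\cdot q^{-1/(2d)},
\end{equation*}
and hence $k \geq \tfrac{1}{2}q^{1/(2d)}$, exactly as claimed.

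The main obstacle is that the chromatic estimate requires non-partiteness in an essential way. If $\Lambda$ preserved types then each of the $d$ type-classes would constitute a subset of size $|V|/d$ containing no facet, instantly defeating any mixing-based independent-set bound. The Cartwright--Steger lattice is crucial precisely because its vertex-transitivity destroys the type function on the quotient, allowing the Mixing Lemma to operate uniformly on \emph{arbitrary} subsets rather than on tuples with prescribed types. A secondary technical point is threading the quantitative property $(T)$ input of Oh through the mixing argument so as to extract the exponent $1/(2d)$ rather than some weaker exponent: this requires matching the $q^{(d-1)/2}$ scaling of the spectral error against the $d$-th-power main term in the facet count, and it is here that the precise form of Oh's estimate, rather than a crude spectral gap, is needed.
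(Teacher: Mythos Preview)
Your overall architecture (injectivity radius via \cite{LuMe}, chromatic number via pigeonhole plus mixing) matches the paper, but there is a genuine gap in the chromatic step: you propose to apply the Mixing Lemma directly to $X_n$ with $A_1=\cdots=A_d=A$. This does not typecheck. The Colorful Mixing Lemma (Theorem~\ref{colormix}, and its combinatorial precursor Corollary~\ref{disc}) is stated and proved only for \emph{type-preserving} quotients $\Gamma\backslash\mathcal{B}$; it takes as input subsets $W_i\subseteq V_i$ of the $d$ type classes. The non-partite quotient $X_n=\Lambda(f_n)\backslash\mathcal{B}$ carries no type function at all, so there is no meaning to feeding it a single set $A$ in all $d$ slots.

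The paper bridges this with a step you have omitted: it passes to the $d$-sheeted type-preserving cover $\widetilde{X}_n=\Gamma_0\backslash\mathcal{B}$, where $\Gamma_0=\Lambda(f_n)\cap G_0$ (Lemma~\ref{cover}, Proposition~\ref{ram}). This cover \emph{is} $d$-partite, and the preimage of every vertex of $X_n$ consists of exactly one vertex of each type. Hence the preimage of the large color class $W$ decomposes as $W_1\sqcup\cdots\sqcup W_d$ with $W_i\subseteq V_i$ and $|W_i|/|V_i|=|W|/|V(X_n)|$. A monochromatic facet in $X_n$ would lift to a facet of $\widetilde{X}_n$ with one vertex in each $W_i$; since none exists, $E(W_1,\ldots,W_d)=\emptyset$, and \emph{now} the Colorful Mixing Lemma on $\widetilde{X}_n$ gives $\chi(X_n)^{-d}\le\prod_i|W_i|/|V_i|\le 2d/q^{1/2}$. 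Your reading of the role of non-partiteness is therefore inverted: non-partiteness of $X_n$ is what makes the chromatic bound nontrivial (a type class would otherwise be a facet-free set of density $1/d$), but the Mixing Lemma itself is applied on the \emph{partite} cover, and the lifting is exactly the device that connects the two. A minor secondary point: the error term the paper obtains is $2d/q^{1/2}$, arising from Oh's bound at $\pi^{(-1,0,\ldots,0,1)}$ (Corollaries~\ref{2ev-bound} and~\ref{Oh}), not the $q^{(d-1)/2}$ scaling you mention.
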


Note, that in order to have arbitrarily large chromatic number, $q$ must go to infinity, otherwise the chromatic number of $X_n$, even as graphs, 
would be bounded since the degree would be bounded. 

Moreover, for our complexes 
$$diam(X_n) \leq \frac{4 \log_q|X_n|}{d^2} \leq 8d \cdot r(X_n),$$ 
for $q \gg d$ (see Remark \ref{diameter}). 
In particular, up to radius $\frac{diam(X_n)}{8d}$, the chromatic number of a ball in $X_n$ is $2$, and only for bigger balls it grows, 
eventually to infinity.

As mentioned before, the fact that the quotients by congruence subgroups give large injectivity radius (and no small non-trivial homology cycles) was shown by Lubotzky
and Meshulam in \cite{LuMe} (see \cite[Section~4.1]{GuLu} for a "general principle" of this kind). So the main novelty of the current
paper is giving a lower bound on the chromatic number for some carefully chosen congruence subgroups (see \S 5.3 below). To this end we will prove the following result
which is of independent interest:

\begin{thm}[Colorful Mixing Lemma] \label{colormix} 
Let $F$ be a non-archimedean local field with finite residue field $\mathbb{F}_{q}$, $q$ odd, $d\geq 3$, and $\mathcal{B}=\mathcal{B}_d(F)$, 
the Bruhat-Tits building associated with $PGL_d(F)$.  Let $\Gamma \leq PGL_{d}(F)$ be a cocompact lattice preserving the type (coloring) function of $\mathcal{B}(0)$ with injectivity radius $\geq 2$, so $X=\Gamma\backslash\mathcal{B}$ is a simplicial complex with a type function $\tau : X(0) \rightarrow \mathbb{Z}/d\mathbb{Z}$. 
For each type $i \in \mathbb{Z}/d\mathbb{Z}=\{1,2,\ldots,d\}$, let $V_i \subset X(0)$ be the set of vertices of type $i$, i.e. $V_i=\tau^{-1}(i)$.

Then for any choice of subsets $W_i \subseteq V_i$ we have: 
\begin{equation}\nonumber
\left|\frac{|E(W_1,\dots,W_d)|}{|X(d-1)|}-\prod_{i=1}^d\frac{|W_i|}{|V_i|}\right|\leq\frac{2d}{q^{\frac{1}{2}}}
\end{equation}
where $E(W_1,\dots, W_d)$ is the set of all $(d-1)$-dimensional cells with exactly one vertex in each $W_i, i=1,\dots, d$.
\end{thm}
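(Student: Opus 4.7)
The plan is to reduce the $d$-partite mixing statement to $d$ bipartite mixing statements by telescoping one coordinate at a time, and to control each of these via the Ramanujan/mixing input coming from Oh's quantitative property $(T)$ for $PGL_d(F)$. Since $\Gamma$ is type-preserving with injectivity radius at least $2$, the link in $X$ of any vertex of type $i$ is combinatorially isomorphic to the link of a type-$i$ vertex in the building $\mathcal{B}$, and these links agree across types. A routine double-counting then gives $|V_1|=\cdots=|V_d|=:n$, and shows that if $\sigma$ is a uniformly random $(d-1)$-cell then its type-$i$ vertex $v_i(\sigma)$ is uniformly distributed on $V_i$.

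Next, writing $\mu_i=|W_i|/|V_i|$ and $g_i=\mathbf{1}_{W_i}-\mu_i\mathbf{1}_{V_i}$, each $g_i$ is mean-zero on $V_i$ with $|g_i|\leq 1$. Telescoping $\prod_i(\mu_i+g_i(v_i))$ one factor at a time and using $\mu_i\in[0,1]$ yields
\begin{equation*}
\left|\frac{|E(W_1,\dots,W_d)|}{|X(d-1)|}-\prod_{i=1}^{d}\mu_i\right| \;\leq\; \sum_{k=1}^{d}\left|\mathbb{E}_{\sigma}\Bigl[g_k(v_k)\prod_{i<k}\mathbf{1}_{W_i}(v_i)\Bigr]\right|,
\end{equation*}
and the $k=1$ summand vanishes immediately by uniformity of the marginal of $v_1(\sigma)$ together with $g_1\perp\mathbf{1}$. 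For $k\geq 2$, exploiting the regularity that every type-$\{1,\dots,k\}$ face extends to the same number of $(d-1)$-cells, the $k$-th summand rewrites as an inner product $\langle g_k,A_k h_{k-1}\rangle$ in $L^2(V_k)$, where $h_{k-1}:=\prod_{i<k}\mathbf{1}_{W_i}$ is a function on type-$\{1,\dots,k-1\}$ faces of $X$ and $A_k$ is the natural "containment" operator from $L^2$ of those faces to $L^2(V_k)$. Cauchy-Schwarz together with the trivial estimate $\|h_{k-1}\|_{L^2}\leq 1$ (with respect to the uniform probability measure on type-$\{1,\dots,k-1\}$ faces) reduces the question to bounding the operator norm of $A_k$ on the orthogonal complement of the constant function in $L^2(V_k)$.

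The heart of the argument, and the main obstacle, is the uniform bound on this operator norm by $2/q^{1/2}$ for each $k\in\{2,\dots,d\}$. Identifying $L^2(V_k)$ and the $L^2$ of type-$\{1,\dots,k-1\}$ faces with parahoric-invariant subspaces of $L^2(\Gamma\backslash PGL_d(F))$, the operator $A_k$ is realized as a Hecke convolution by a single double coset; the Ramanujan property of $X$ ensures that on the mean-zero subspace this operator acts through tempered representations of $PGL_d(F)$; and Oh's quantitative estimate for the Harish-Chandra function of $PGL_d(F)$ then supplies the desired $q^{-1/2}$ decay. The delicate point is tracking parahoric volumes and double-coset multiplicities carefully enough to pin the absolute constant in each term down to $2$. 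Summing the $d$ such bounds yields the stated $2d/q^{1/2}$.
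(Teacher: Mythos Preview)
Your telescoping is valid and close in spirit to the paper's reduction (Lemma~\ref{reduction} and Corollary~\ref{disc}): both peel off one type at a time and reduce to $d-1$ bipartite estimates. The difference is in \emph{which} bipartite graphs appear. The paper always pairs $V_i$ with the set $E_i$ of cotype-$i$ \emph{walls}, i.e.\ $(d-2)$-faces missing only type $i$. You instead pair $V_k$ with the set of type-$\{1,\dots,k-1\}$ faces, which are $(k-2)$-dimensional. This distinction is exactly what makes the paper's spectral step go through and yours stall.

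In the paper, squaring the bipartite adjacency of $B_i$ yields a self-map $NN^t$ on $L^2(V_i)$, and Lemma~\ref{adj} identifies it exactly as $\tfrac{1}{q+1}I+\tfrac{q}{q+1}H_{(-1,0,\dots,0,1)}$, because two same-type vertices share a codimension-one wall iff their relative position is $\overline{(-1,0,\dots,0,1)}$ (Lemma~\ref{neighboor}(3)). Oh's bound (Corollary~\ref{Oh}) on that single Hecke operator then gives $\tilde\lambda(B_i)\le 2q^{-1/2}$ with no further bookkeeping. For your $A_k$ with $k<d$, two type-$k$ vertices sharing a type-$\{1,\dots,k-1\}$ face can sit at several distinct relative positions (already for $d=4$, $k=2$), so $A_kA_k^*$ is a nontrivial combination of Hecke operators whose coefficients you have not computed; and if you try to avoid squaring by viewing $A_k$ directly as a convolution between parahoric-invariant subspaces, Oh's Theorem~\ref{oh} carries the factor $(\dim Kv\cdot\dim Ku)^{1/2}$, which for a proper parahoric $P\le K$ is $[K:P]^{1/2}$, a positive power of $q$ that kills the $q^{-1/2}$ gain. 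So ``pin the absolute constant down to $2$'' is not a delicate bookkeeping issue but the place where your decomposition is genuinely harder than the paper's.

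Separately, you invoke ``the Ramanujan property of $X$'' to obtain temperedness before applying Oh. The theorem is stated for an arbitrary type-preserving cocompact lattice with injectivity radius $\ge 2$; no Ramanujan hypothesis is assumed, and none is used in the paper. The actual input is only that mean-zero $K$-invariant functions on each $V_i$ are orthogonal to every $G^+$-invariant vector in $L^2(\Gamma\backslash G)$ (Lemma~\ref{G^+K=G_0}); Oh's theorem then applies unconditionally to bound the matrix coefficient. If temperedness were available you would not need Oh at all.
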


So, the lemma ensures that when $q \gg 0$, the number $|E(W_1,\dots,W_d)|$ of facets with one vertex from each $W_i$ 
is approximately what one should expect by random considerations.

This mixing lemma will be deduced from a more general one (see Corollary \ref{disc} below) using a result of Hee Oh \cite{Oh} 
which gives a quantitative estimate for Kazhdan property (T) of $PGL_{d}(F)$. In this argument we follow a related use of Oh's work in \cite{FGLNP}.

It is interesting to observe that the above mixing lemma is for quotients of $\mathcal{B} = \mathcal{B}_{d}(F)$ on which the $d$-coloring by the $d$ types is preserved, 
but eventually our main theorem is about $X_{f}=\Lambda(f) \backslash \mathcal{B}$ which are not $d$-colorable (in fact, our main goal is to show that they need many more colors!) We will acquire this by applying the colorful mixing lemma to the natural $d$-colorable $d$-sheeted cover of $X_{f}$ (see Section 5 for details). 

The paper is organized as follows. 
After a few preliminaries in Section 2,  
we show in Section 3 how the discrepancy of a colorful simplicial complex can be estimated using the eigenvalues of some naturally associated bipartite graphs. 
In Section 4, we will use Oh's theorem and apply it to the colorable quotients of the Bruhat-Tits building of $PGL_{d}(F)$, to estimate these eigenvalues. 
In Section 5, we will follow carefully \cite{LSV2} to choose the suitable congruence subgroups $\Lambda(f)$ of $\Lambda$ --- the Cartwright-Steger lattice. 
We will use the congruence subgroups $\Lambda(f)$ for which $\Lambda(f) \backslash \mathcal{B}$ is a non-partite complex, see there. 
In Section 6, we collect all the information together and prove Theorem \ref{chromatic-bound}.

This paper is dedicated to Nati Linial who has pioneered the study of high dimensional expanders and many other things.

\textit{Acknowledgement.} The authors are grateful to the ERC, ISF and NSF for partial support.
This work is part of the Ph.D. theses of the first two authors at the Hebrew University of Jerusalem, Israel.
The authors are grateful to Nati Linial for valuable discussions.


\section{Notations and conventions}\label{sec:notations}
Throughout this paper $H$ is a finite $d$-uniform hypergraph, i.e. $H=(V,E)$ and $E \subset {V\choose d}$. We say that $H$ has a $d$-type function $\tau:V=V(H)\to \mathbb{Z}/d\mathbb{Z}$ if each edge contains vertices of all $d$ types, i.e. $\tau$ is one-to-one when restricted to each edge $e\in E$. We call such hypergraph $d$-partite and we also write it as $H=(V_0,\dots,V_{d-1},E)$, where $V_i=\tau^{-1}(i),i\in\mathbb{Z}/d\mathbb{Z}$, and so $E$ can be considered as a subset of $\prod_{i=0}^{d-1}V_i$. 
A $2$-partite hypergraph is what is usually called a bipartite graph.
Sometimes it is more convenient to think of $\mathbb{Z}/d\mathbb{Z}$ as $\{1,\ldots,d\}$.

Recall that a simplicial complex $X=(V,E)$ is a family $E$ of finite subsets (called faces or simplicies) of the set of vertices $V$ closed under inclusion, 
i.e. if $F_1\in E$ and $F_2\subseteq F_1$ then $F_2\in E$.

For $F\in E$, denote $\dim (F) = |F|-1$ and $X(i)$ the set of simplices of dimension $i$. We say that $\dim (X) = d$ if $X(d) \neq \emptyset$ while $X(d+1)=\emptyset$. 
It is called a pure complex of dimension $d$, if every maximal face in $E$ is of dimension $d$. Given $X=(V,E)$, we denote by $X^{(i)}$ the $i$-skeleton of $X$, this is the subcomplex of $X$ of all the faces $F$ in $E$ with $\dim (F)\leq i$.

Given a pure simplicial complex $X=(V,E)$ of dimension $(d-1)$, one can associate with it the $d$-uniform hypergraph $H=\widetilde{H}(X)=(V,X(d-1))$. 
Conversely if $H=(V,E)$ is a $d$-uniform hypergraph then by taking $\tilde{E}=\{F \subseteq V\mid \exists\, e\in E \text{ with } F\subseteq e\}$ 
we get a pure simplicial complex $X=\widetilde{X}(H)=(V,\tilde{E})$ of dimension $d-1$. Clearly, $\widetilde{X}(\widetilde{H}(X))=X$ and $\widetilde{H}(\widetilde{X}(H))=H$. 
Moreover if $\tau$ is a type function on $H$, it defines a type function on $X$ such that when restricted to every maximal face (facet) it is one-to-one.
Such complexes are called balanced.

The theories of pure simplicial complexes and uniform hypergraphs are therefore completely equivalent. In this paper, we will use these languages alternately.


\section{Discrepancy}
For a $d$-partite hypergraph $H = (V_1,\ldots,V_d,E)$, and a collection of subsets $W_i \subseteq V_i, i=1,\dots,d$, denote $E(W_1,\ldots,W_d) = E\cap \prod_{i=1}^d W_i$, the edges in $E$ with vertices in $W_1,\ldots,W_d$. We define the \textit{discrepancy} of $W_1,\ldots,W_d$ in $H$ to be 
\begin{equation}
disc_H(W_1,\ldots,W_d) = \left| \frac{|E(W_1,\ldots,W_d)|}{|E|} - \prod_{i=1}^d\frac{|W_i|}{|V_i|} \right| 
\end{equation}
In other words, the discrepancy measures the difference between the actual portion of edges between $W_1,\ldots,W_d$ and the expected portion if the hyperedges would have been chosen randomly uniformly.

For a biregular bipartite graph, the expander mixing lemma provides an upper bound on the discrepancy in the terms of the the second largest eigenvalue of the graph. In this section our aim is to give a similar bound for $d$-partite hypergraphs.

\subsection{Discrepancy of bipartite graphs}
Let $G=(V_1,V_2,E)$ be a finite connected bipartite $(k_1,k_2)$-biregular graph on $n$ vertices, i.e. each vertex in $V_1$ has exactly $k_1$ neighbors, 
all of them in $V_2$, and each vertex in $V_2$ has $k_2$ neighbors, all of them in $V_1$, $|V_1|+|V_2|=n$, and so $k_1|V_1| = k_2|V_2| = |E|$.

Recall that the adjacency operator $A=A(G)$ of the graph $G$ is the following operator on the space of complex valued functions on the vertices
\begin{equation}
(Af)(v)=\sum_{u\sim v} f(u), 
\end{equation}
where $u\sim v$ stands for $(u,v)\in E$.

The following lemmas are probably known, but for lack of a reference we give short proofs. 

\begin{lem} \label{3.1}
Let $\lambda_n \leq \ldots \leq \lambda_2 \leq \lambda_1$ be the eigenvalues of the adjacency operator $A$ of $G$. Then
\begin{enumerate}
\item \label{symmetry-of-the-spectrum}The spectrum is symmetric, i.e. $\lambda_{n-i+1} = -\lambda_i$ for all $i$.
\item \label{the-largest-ev} The largest (resp. smallest) eigenvalue is $\lambda_1 = \sqrt{k_1k_2}$ (resp., $\lambda_n = -\sqrt{k_1 k_2}$), 
whose corresponding eigenfunction is $\sqrt{k_1}\mathds{1}_{V_1} + \sqrt{k_2}\mathds{1}_{V_2}$ (resp. $\sqrt{k_1}\mathds{1}_{V_1} - \sqrt{k_2}\mathds{1}_{V_2}$).
\end{enumerate} 
\end{lem}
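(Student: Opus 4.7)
The plan is to exploit the block off-diagonal structure of the adjacency operator. Ordering the vertices so that those of $V_1$ come first, the bipartite hypothesis forces $A$ to have the form
\[
A=\begin{pmatrix} 0 & B \\ B^{T} & 0 \end{pmatrix},
\]
where $B$ is a $|V_1|\times|V_2|$ matrix recording edges. Every eigenfunction then splits as $f=(f_1,f_2)$ with $f_i$ supported on $V_i$, and the eigenvalue equation $Af=\lambda f$ becomes $Bf_2=\lambda f_1$ and $B^{T}f_1=\lambda f_2$.

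For part (\ref{symmetry-of-the-spectrum}), I would observe that if $(f_1,f_2)$ is an eigenfunction with eigenvalue $\lambda$, then flipping the sign on $V_2$ gives $(f_1,-f_2)$, which satisfies $A(f_1,-f_2)=(-Bf_2,B^{T}f_1)=(-\lambda f_1,\lambda f_2)=-\lambda(f_1,-f_2)$. This sign flip is an involution on eigenfunctions that sends the $\lambda$-eigenspace isomorphically onto the $(-\lambda)$-eigenspace, so the multiset of eigenvalues is symmetric about zero, i.e.\ $\lambda_{n-i+1}=-\lambda_i$.

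For part (\ref{the-largest-ev}), the plan is a direct computation followed by a Perron--Frobenius argument. Let $\varphi=\sqrt{k_1}\mathds{1}_{V_1}+\sqrt{k_2}\mathds{1}_{V_2}$. For $v\in V_1$, all $k_1$ of its neighbors lie in $V_2$, so $(A\varphi)(v)=k_1\sqrt{k_2}=\sqrt{k_1k_2}\cdot\sqrt{k_1}=\sqrt{k_1k_2}\,\varphi(v)$, and the symmetric computation works for $v\in V_2$. Thus $\varphi$ is an eigenfunction with eigenvalue $\sqrt{k_1k_2}$. Since $A$ has non-negative entries and $G$ is connected, Perron--Frobenius tells us that the spectral radius $\rho(A)$ is itself an eigenvalue and admits a strictly positive eigenvector which is unique up to scaling. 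Our $\varphi$ is strictly positive, hence must be (a multiple of) this Perron vector, giving $\lambda_1=\sqrt{k_1k_2}$. The statement about the smallest eigenvalue then follows by applying the involution from part (\ref{symmetry-of-the-spectrum}) to $\varphi$, which produces the eigenfunction $\sqrt{k_1}\mathds{1}_{V_1}-\sqrt{k_2}\mathds{1}_{V_2}$ with eigenvalue $-\sqrt{k_1k_2}=\lambda_n$.

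There is no real obstacle here; the only point that needs mild care is invoking Perron--Frobenius correctly, which requires precisely the connectedness assumption placed on $G$ in the hypothesis. Everything else is a direct computation.
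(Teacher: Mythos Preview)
Your proof is correct. Part~(\ref{symmetry-of-the-spectrum}) is essentially identical to the paper's argument: both flip the sign on $V_2$ to exhibit the involution $\lambda\leftrightarrow-\lambda$.

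Part~(\ref{the-largest-ev}) differs in how the value $\sqrt{k_1k_2}$ is identified as the \emph{top} eigenvalue. You verify directly that $\varphi=\sqrt{k_1}\mathds{1}_{V_1}+\sqrt{k_2}\mathds{1}_{V_2}$ is an eigenvector and then invoke Perron--Frobenius (using connectedness) to conclude that, being strictly positive, it must correspond to the spectral radius. The paper instead squares: it observes that $A^2$ is the adjacency matrix of a $k_1k_2$-regular multigraph, so its largest eigenvalue is $k_1k_2$, whence $\lambda_1=\sqrt{k_1k_2}$; the explicit eigenvector is then checked separately. Your route is cleaner in that the eigenvector and the extremality come in one stroke, at the cost of citing a named theorem; the paper's $A^2$ trick is more self-contained and avoids any appeal to Perron--Frobenius. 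Both arguments rely on connectedness (yours explicitly, the paper's implicitly in asserting that a $k_1k_2$-regular multigraph has top eigenvalue exactly $k_1k_2$).
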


\begin{proof}
Let $f$ be an eigenfunction of $A$ with an eigenvalue $\lambda$, i.e. $Af=\lambda\cdot f$. Then it is easy to see that the following function 
\begin{equation}
  g(v)=\begin{cases}
       f(v),&\text{if }v\in V_1\\
       -f(v),&\text{if }v\in V_2      
      \end{cases}
\end{equation}
satisfies $Ag=(-\lambda)\cdot g$, which proves (1).
 
If $\lambda$ is an eigenvalue of $A$, then $\lambda^2$ is an eigenvalue of $A^2$. The operator $A^2$ expresses the 2-step walk on $G$, 
i.e. the $(v,u)$-entry in the matrix of $A^2$ equals the number of paths of length 2 in $G$ connecting the vertices $v$ and $u$. 
By the $(k_1,k_2)$-regularity condition, the sum of any row of the matrix $A^2$ is $k_1k_2$. 
Thus, $A^2$ is the adjacency matrix of a $k_1 k_2$-regular multigraph, 
i.e. a graph with loops and multiple edges, and hence its largest eigenvalue is $k_1 k_2$. 
Thus the largest eigenvalue of $A$ is $\sqrt{k_1 k_2}$, and by (1), the smallest is $-\sqrt{k_1 k_2}$.
 
By the biregularity condition,
\begin{equation}
A (\mathds{1}_{V_1}) = k_2 \mathds{1}_{V_2} \quad \text{and} \quad A (\mathds{1}_{V_2}) = k_1 \mathds{1}_{V_1}.
\end{equation}
Hence  
\begin{equation}
A(\sqrt{k_1}\mathds{1}_{V_1} \pm \sqrt{k_2}\mathds{1}_{V_2}) = \pm \sqrt{k_1 k_2}(\sqrt{k_1}\mathds{1}_{V_1} \pm \sqrt{k_2}\mathds{1}_{V_2}).
\end{equation}
\end{proof}

\begin{lem}\label{expander-mixing-lemma}[Expander mixing lemma for bipartite graphs.]
Let $G=(V_1,V_2,E)$ be a bipartite $(k_1,k_2)$-biregular finite connected graph. 
Let $\lambda = \lambda(G)$ be the second largest eigenvalue of the adjacency operator of $G$.
Then for any $S \subseteq V_1$ and $T \subseteq V_2$,
\begin{equation}\label{bipartite}
\left| |E(S,T)|-\frac{\sqrt{k_1k_2}|S||T|}{\sqrt{|V_1||V_2|}}\right| \leq \lambda(G) \sqrt{|S||T|}
\end{equation}
\end{lem}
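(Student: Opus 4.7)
The plan is to mimic the classical spectral proof of the expander mixing lemma, writing $|E(S,T)| = \langle A\mathds{1}_S, \mathds{1}_T\rangle$ and decomposing the indicator functions according to the spectral information supplied by Lemma~\ref{3.1}. Set $\mathds{1}_S = \alpha\mathds{1}_{V_1} + f_S$ with $\alpha = |S|/|V_1|$ and $f_S \perp \mathds{1}_{V_1}$, and similarly $\mathds{1}_T = \beta\mathds{1}_{V_2} + f_T$ with $\beta = |T|/|V_2|$ and $f_T \perp \mathds{1}_{V_2}$. Because $\mathds{1}_S$ is supported on $V_1$, the correction $f_S$ is also supported on $V_1$, so $f_S \perp \mathds{1}_{V_2}$ automatically; the symmetric remark yields $f_T \perp \mathds{1}_{V_1}$. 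By Lemma~\ref{3.1}(\ref{the-largest-ev}), the two extremal eigenspaces of $A$, for eigenvalues $\pm\sqrt{k_1k_2}$, are spanned by $\sqrt{k_1}\mathds{1}_{V_1}\pm\sqrt{k_2}\mathds{1}_{V_2}$ and hence lie in $\mathrm{span}(\mathds{1}_{V_1},\mathds{1}_{V_2})$; so $f_S,f_T$ land in the orthogonal complement of these extremal eigenspaces, and combined with the spectral symmetry of Lemma~\ref{3.1}(\ref{symmetry-of-the-spectrum}) the operator $A$ restricted to that complement has norm at most $\lambda(G)$.

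Next, I would expand $\langle A\mathds{1}_S,\mathds{1}_T\rangle$ bilinearly using the decompositions. The identities $A\mathds{1}_{V_1}=k_2\mathds{1}_{V_2}$ and $A\mathds{1}_{V_2}=k_1\mathds{1}_{V_1}$ (established inside the proof of Lemma~\ref{3.1}) together with self-adjointness of $A$ and the orthogonalities of $f_S,f_T$ kill both cross terms $\alpha\langle A\mathds{1}_{V_1},f_T\rangle$ and $\beta\langle Af_S,\mathds{1}_{V_2}\rangle$. The diagonal term equals $\alpha\beta\langle A\mathds{1}_{V_1},\mathds{1}_{V_2}\rangle = \tfrac{|S||T|}{|V_1||V_2|}\,k_2|V_2| = \tfrac{k_2|S||T|}{|V_1|}$, which coincides with the main term $\tfrac{\sqrt{k_1k_2}|S||T|}{\sqrt{|V_1||V_2|}}$ of (\ref{bipartite}) thanks to the biregularity identity $k_1|V_1|=k_2|V_2|$.

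The remaining piece is the residual $\langle Af_S,f_T\rangle$, which is controlled by Cauchy--Schwarz and the spectral bound above: $|\langle Af_S,f_T\rangle| \leq \|Af_S\|\cdot\|f_T\| \leq \lambda(G)\|f_S\|\,\|f_T\| \leq \lambda(G)\sqrt{|S||T|}$, using $\|f_S\|^2 = |S|(1-|S|/|V_1|) \leq |S|$ and analogously for $f_T$. There is no genuine obstacle; the one point that demands a moment's care is the first step, namely verifying that $f_S$ and $f_T$ lie in the orthogonal complement of \emph{both} extremal eigenspaces rather than only one, which the support observation delivers for free and lets the single number $\lambda(G)$ absorb contributions from both the positive and negative non-extremal eigenvalues.
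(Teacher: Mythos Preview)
Your proof is correct and follows essentially the same spectral strategy as the paper: split off the contribution from the two extremal eigenspaces, identify it with the main term via $k_1|V_1|=k_2|V_2|$, and bound the remainder by $\lambda(G)\sqrt{|S||T|}$ using Cauchy--Schwarz together with the spectral symmetry of Lemma~\ref{3.1}. The only cosmetic difference is that the paper expands $\mathds{1}_S,\mathds{1}_T$ in a full orthonormal eigenbasis and computes the coefficients $s_1,s_n,t_1,t_n$ explicitly, whereas you work with the (unnormalized) basis $\mathds{1}_{V_1},\mathds{1}_{V_2}$ of the same two-dimensional trivial subspace, which spares you those normalizations.
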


\begin{proof}
As before, denote by $A$ the adjacency matrix of $G$, and its spectrum by $\lambda_n \leq \ldots \leq \lambda_2 \leq \lambda_1$. Note that 
\begin{equation}
 |\lambda_1 |=|\lambda_n | = \sqrt{k_1 k_2}\quad\text{and}\quad |\lambda_i|\leq \lambda(G)\quad\text{for }i=2,\dots,n-1.
\end{equation}
Let $f_1,\ldots,f_n$ be an orthonormal basis of eigenfunctions of $A$, i.e. $Af_i=\lambda_i f_i$ and $\langle f_i,f_j \rangle=\delta_{i,j}$, 
where the inner product is defined as
\begin{equation}
\langle f,g \rangle =\sum_{v\in V_1\sqcup V_2} f(v)\overline{g(v)}. 
\end{equation}

Let $\mathds{1}_{S}$ and $\mathds{1}_{T}$ be the characteristic functions of $S$ and $T$, respectively. 
Then $|E(S,T)| = \langle A \mathds{1}_{S} , \mathds{1}_T \rangle$. 
Expressing them as linear combinations of orthonormal eigenvectors of $A$ 
\begin{equation}
\mathds{1}_{S}=\sum_{i=1}^{n} s_i f_i \quad \text{and}\quad \mathds{1}_{T} = \sum_{i=1}^{n} t_i f_i,
\end{equation}
we get
\begin{equation}
|E(S,T)| = \langle A \mathds{1}_S , \mathds{1}_T \rangle =  \sum_{i=1}^{n} s_i \overline{t_i} \lambda_i  
= s_1 \overline{t_1} \lambda_1  + s_n \overline{t_n} \lambda_n  + \sum_{i=2}^{n-1} s_i \overline{t_i} \lambda_i . 
\end{equation}
By Lemma \ref{3.1}, we can assume that 
\begin{equation}
f_1 = \frac{1}{\sqrt{k_1 |V_1| + k_2 |V_2|}}(\sqrt{k_1}\mathds{1}_{V_1} + \sqrt{k_2}\mathds{1}_{V_2})
\end{equation}
and
\begin{equation}
f_n = \frac{1}{\sqrt{k_1 |V_1| + k_2 |V_2|}}(\sqrt{k_1}\mathds{1}_{V_1} - \sqrt{k_2}\mathds{1}_{V_2}), 
\end{equation}
Hence, as $k_1|V_1| = k_2|V_2|$, we get
\begin{equation}
s_1 = \langle f_1,\mathds{1}_S \rangle = \frac{|S|}{\sqrt{2|V_1|}} = s_n \quad \text{and} 
\quad \overline{t_1} = \langle f_1,\mathds{1}_T \rangle = \frac{|T|}{\sqrt{2|V_2|}} = -\overline{t_n}. 
\end{equation}
Therefore, since $\lambda_1 = \sqrt{k_1 k_2} = -\lambda_n$,
\begin{equation}
s_1\overline{t_1} \lambda_1 = s_n \overline{t_n} \lambda_n = \frac{\sqrt{k_1 k_2}|S| |T|}{2\sqrt{|V_1||V_2|}}.
\end{equation}
And so,
\begin{equation}
\begin{split}
\left||E(S,T)| - \frac{\sqrt{k_1 k_2}|S| |T|}{\sqrt{|V_1||V_2|}} \right|= \left| \sum_{i=2}^{n-1} s_i \overline{t_i} \lambda_i \right| \leq\lambda(G) \sum_{i=2}^{n-1}\left|s_i \overline{t_i} \right|\leq\\ \leq \lambda(G)\sqrt{(\sum_{i=2}^{n-1}|s_i|^2)(\sum_{i=2}^{n-1}|t_i|^2)} \leq \lambda(G) \sqrt{\|\mathds{1}_S\|^2\|\mathds{1}_T\|^2} \leq \lambda(G) \sqrt{|S||T|}.
\end{split}
\end{equation}
\end{proof}

We learned recently that Lemma \ref{expander-mixing-lemma} appears also in \cite{BAV}.

Recall that the normalized adjacency operator $\widetilde{A}=\widetilde{A}(G)$ of a graph $G$ is the following operator on the space of complex valued functions on the vertices
\begin{equation} \label{adj-def}
(\widetilde{A}f)(v)=\frac{1}{\deg v}\sum_{u\sim v} f(u), 
\end{equation}
where $u\sim v$ stands for $(u,v)\in E$.

For a biregular bipartite graph $G$ if $f$ is an eigenfunction of the adjacency operator $A(G)$ with an eigenvalue $\lambda$, than $\frac{1}{\sqrt{\deg v}}f(v)$ is an eigenfunction of the normalized adjacency operator $\widetilde{A}(G)$ with an eigenvalue $\frac{\lambda}{\sqrt{k_1 k_2}}$. 

In particular, the largest and smallest eigenvalues of the normalized adjacency operator are $1$ and $(-1)$, respectively. 
The second largest eigenvalue $\tilde{\lambda}$ of $\tilde{A}$ is equal to 
\begin{equation} \label{norm-ev}
\widetilde{\lambda}(G)=\frac{\lambda(G)}{\sqrt{k_1 k_2}}.
\end{equation}

\begin{dfn}
Let $G=(V_1,V_2,E)$ be a bipartite graph, $S \subseteq V_1$ and $T \subseteq V_2$. Then the discrepancy of these subsets is defined to be 
\begin{equation}
disc_G(S,T) = \left|\frac{|E(S,T)|}{|E|}-\frac{|S|}{|V_1|}\frac{|T|}{|V_2|}\right|.
\end{equation}
\end{dfn}

In these terms, the following statement is a corollary of the Expander Mixing Lemma (Lemma \ref{expander-mixing-lemma}). 
\begin{cor} \label{bipar}
Let $G=(V_1,V_2,E)$ be a bipartite $(k_1,k_2)$-biregular finite graph. Then for any $S \subseteq V_1, T \subseteq V_2$ 
\begin{equation}
disc_G(S,T) \leq \tilde{\lambda}(G) \cdot \sqrt{\frac{|S|}{|V_1|} \frac{|T|}{|V_2|}}
\end{equation}
\end{cor}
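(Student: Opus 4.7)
The plan is to derive this corollary directly from the bipartite Expander Mixing Lemma (Lemma \ref{expander-mixing-lemma}) by dividing both sides of its inequality by $|E|$ and repackaging the constants using biregularity.

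First, I would record the consequence of the $(k_1,k_2)$-biregularity that $|E| = k_1 |V_1| = k_2 |V_2|$, which implies
\begin{equation}\nonumber
|E|^2 = k_1 k_2 \, |V_1|\,|V_2|, \qquad \text{so} \qquad |E| = \sqrt{k_1 k_2 \, |V_1|\,|V_2|}.
\end{equation}
This is the key identity that lets the $\sqrt{k_1 k_2}$ factor inside Lemma \ref{expander-mixing-lemma} combine cleanly with a $|E|$ in the denominator.

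Next, starting from Lemma \ref{expander-mixing-lemma},
\begin{equation}\nonumber
\left| |E(S,T)| - \frac{\sqrt{k_1 k_2}\,|S|\,|T|}{\sqrt{|V_1|\,|V_2|}} \right| \leq \lambda(G)\,\sqrt{|S|\,|T|},
\end{equation}
I would divide both sides by $|E| = \sqrt{k_1 k_2\,|V_1|\,|V_2|}$. On the left, the subtracted term simplifies as
\begin{equation}\nonumber
\frac{1}{|E|}\cdot\frac{\sqrt{k_1 k_2}\,|S|\,|T|}{\sqrt{|V_1|\,|V_2|}} = \frac{\sqrt{k_1 k_2}\,|S|\,|T|}{\sqrt{k_1 k_2}\,|V_1|\,|V_2|} = \frac{|S|}{|V_1|}\cdot\frac{|T|}{|V_2|},
\end{equation}
so the left side is exactly $disc_G(S,T)$. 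On the right,
\begin{equation}\nonumber
\frac{\lambda(G)\,\sqrt{|S|\,|T|}}{\sqrt{k_1 k_2\,|V_1|\,|V_2|}} = \frac{\lambda(G)}{\sqrt{k_1 k_2}}\cdot\sqrt{\frac{|S|}{|V_1|}\cdot\frac{|T|}{|V_2|}} = \tilde{\lambda}(G)\cdot\sqrt{\frac{|S|}{|V_1|}\cdot\frac{|T|}{|V_2|}},
\end{equation}
using the definition \eqref{norm-ev} of $\tilde{\lambda}(G)$. Combining these two computations yields the claimed bound.

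There is essentially no obstacle here; the statement is a pure bookkeeping consequence of Lemma \ref{expander-mixing-lemma} together with the biregular identity $|E| = \sqrt{k_1 k_2 |V_1||V_2|}$ and the normalization $\tilde{\lambda}(G) = \lambda(G)/\sqrt{k_1 k_2}$. The only point worth double-checking is that the equality $|E| = \sqrt{k_1 k_2 |V_1||V_2|}$ really does eliminate both the $\sqrt{k_1 k_2}$ factor on the deterministic term and convert $\lambda(G)$ to $\tilde{\lambda}(G)$ on the error term simultaneously — which, as shown above, it does.
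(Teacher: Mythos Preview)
Your proposal is correct and follows essentially the same route as the paper's proof: record the biregularity identity $|E|=k_1|V_1|=k_2|V_2|=\sqrt{k_1k_2|V_1||V_2|}$, divide the inequality of Lemma~\ref{expander-mixing-lemma} by $|E|$, and identify $\lambda(G)/\sqrt{k_1k_2}$ with $\tilde\lambda(G)$.
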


\begin{proof}
By the $(k_1,k_2)$-biregularity of $G$ 
\begin{equation}
|E|=k_1|V_1|=k_2|V_2|=\sqrt{k_1k_2|V_1||V_2|}, 
\end{equation} 
hence 
\begin{equation}
\frac{\sqrt{k_1k_2}|S||T|}{\sqrt{|V_1||V_2|}} = |E| \cdot \left(\frac{|S||T|}{|V_1||V_2|}\right).
\end{equation}
And therefore, by Lemma \ref{expander-mixing-lemma}, we get 
\begin{equation}
\left|\frac{|E(S,T)|}{|E|}-\frac{|S|}{|V_2|}\frac{|T|}{|V_1|}\right| \leq \frac{\lambda(G)}{\sqrt{k_1k_2}} \sqrt{\frac{|S||T|}{|V_1||V_2|}}.
\end{equation}
\end{proof}

\subsection{Discrepancy of hypergraphs}


Let $H=(V_1,\ldots,V_d,E)$ be a $d$-partite hypergraph. Our aim is to give  estimates and bounds on its discrepancy.  
We will do it by defining various associated bipartite graphs and then we will bound the discrepancies of $H$ by their discrepancies.

For $i=1,\dots,d$, denote $E_i=\{F\setminus \{v_i\}\mid F\in E,v_i\in V_i\}$, i.e. the set $E_i$ is the set of of all edges of $H$ with the vertex of type $i$ being removed.
A set $Y \in E_i$ is called a \textit{wall of cotype $i$}. Denote by $H_i=(V_1,\dots,V_{i-1},V_{i+1},\dots,V_d,E_i)$ the $(d-1)$-partite hypergraph induced from $H$ by removing the vertices of type $i$.

Denote by $B_i$ the bipartite graph with $V_i$ as one set of vertices and $E_i$ as the second. 
A vertex $v_i\in V_i$ and a wall $Y \in E_i$ are connected by an edge of $B_i$ if their union forms an edge of $H$, i.e.$\{v_i\}\cup Y \in E$. 
We will write $B_i=(V_i,E_i,E_{B_i})$. An edge of $B_i$ is a pair $(v_{i},F\setminus\{v_i\})$, where $F\in E$ is an edge of $H$. 
Following the terminology of simplicial complexes, this is the "vertices versus walls" graph. 
Note that since every edge of $H$ has exactly one vertex in $V_i$, there is a natural bijection between the edges of $B_i$ and the edges of $H$.

As before, for a collection of subsets $W_j\subseteq V_j$ for $j=1,\dots,d$, we denote by $E(W_1,\dots,W_d)$ the set of all edges of $H$ with vertices in the sets $W_1,\dots,W_d$. Analogously, for $H_i$ we will denote by $E_i(W_1,\dots, W_d)$ the subset of all edges with vertices in $W_1,\dots,W_{i-1},W_{i+1},\dots,W_d$. 
For the graph $B_i$ we will denote by $E_{B_i}(W_i,E_i(W_1,\dots, W_d))$ the set of all edges of $B_i$ with one vertex in $W_i$ and the other in $E_i(W_1,\dots,W_d)$. 
Note that the above mentioned bijection between the edges of $H$ and $B_i$ restricts to a bijection between $E(W_1,\dots,W_d)$ and $E_{B_i}(W_i,E_i(W_1,\dots, W_d))$.

The following lemma reduces the question of bounding the discrepancy of a $d$-partite hypergraph to its induced hypergraphs and bipartite graphs.
\begin{lem} \label{reduction}
Let $W_j \subseteq V_j$ for $j=1,\dots,d$. Then for $i=1,\dots,d$,
\begin{multline}
disc_H(W_1,\dots, W_d) \leq \\
disc_{B_i}(W_i,E_i(W_1,\dots,W_d)) + \frac{|W_i|}{|V_i|}disc_{H_i}(W_1,\dots,W_{i-1},W_{i+1},\dots,W_d)
\end{multline}
\end{lem}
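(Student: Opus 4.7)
The plan is to reduce the claim to a single application of the triangle inequality after rewriting $disc_H(W_1,\dots,W_d)$ using the natural bijection between the edges of $H$ and the edges of $B_i$. Concretely, I would first observe that the bijection $F \mapsto (v_i, F\setminus\{v_i\})$ (where $v_i$ is the unique vertex of $F$ in $V_i$) restricts to a bijection between $E$ and $E_{B_i}$, and also between $E(W_1,\dots,W_d)$ and $E_{B_i}(W_i, E_i(W_1,\dots,W_d))$. Hence
\[
\frac{|E(W_1,\dots,W_d)|}{|E|} \;=\; \frac{|E_{B_i}(W_i, E_i(W_1,\dots,W_d))|}{|E_{B_i}|}.
\]

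Second, I would introduce the natural intermediate quantity
\[
M \;=\; \frac{|W_i|}{|V_i|} \cdot \frac{|E_i(W_1,\dots,W_d)|}{|E_i|},
\]
which is exactly the term that appears inside the absolute value of $disc_{B_i}(W_i, E_i(W_1,\dots,W_d))$ when paired with $|E_{B_i}(W_i, E_i(W_1,\dots,W_d))|/|E_{B_i}|$, and also equals $\frac{|W_i|}{|V_i|}$ times the term inside the absolute value of $disc_{H_i}$ when paired with $\prod_{j\neq i}\frac{|W_j|}{|V_j|}$. Adding and subtracting $M$ inside the absolute value defining $disc_H(W_1,\dots,W_d)$ and applying the triangle inequality gives
\[
disc_H(W_1,\dots,W_d) \;\leq\; \left|\frac{|E_{B_i}(W_i,E_i(W_1,\dots,W_d))|}{|E_{B_i}|} - M\right| + \left|M - \prod_{j=1}^{d}\frac{|W_j|}{|V_j|}\right|.
\]

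Third, the first summand is $disc_{B_i}(W_i, E_i(W_1,\dots,W_d))$ by definition, while the second summand factors as $\frac{|W_i|}{|V_i|}\cdot disc_{H_i}(W_1,\dots,W_{i-1},W_{i+1},\dots,W_d)$ after pulling $\frac{|W_i|}{|V_i|}$ out of the absolute value (which is allowed since it is nonnegative). Combining these identifications yields exactly the claimed inequality.

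There is no real obstacle here; the only thing to keep straight is the bookkeeping that the two appearances of $M$ really do correspond to the discrepancies of $B_i$ and of $H_i$ as defined, which in turn rests on the edge bijection and on the fact that $|E_i|$ is used consistently both as the number of walls of cotype $i$ in $H$ and as the second-vertex-class size of $B_i$. Once this identification is in place, the proof is a one-line triangle inequality.
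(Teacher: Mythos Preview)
Your proof is correct and follows essentially the same approach as the paper: both use the edge bijection to identify $|E(W_1,\dots,W_d)|/|E|$ with $|E_{B_i}(W_i,E_i(W_1,\dots,W_d))|/|E_{B_i}|$, then add and subtract the same intermediate quantity $\frac{|W_i|}{|V_i|}\cdot\frac{|E_i(W_1,\dots,W_d)|}{|E_i|}$ and apply the triangle inequality. The only difference is that you name this quantity $M$ explicitly, which if anything makes the argument slightly clearer.
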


\begin{proof}
By the definition of the bipartite graph $B_i$
\begin{equation}
\frac{|E(W_1,\dots,W_d))|}{|E|}  = \frac{|E_{B_i}(W_i,E_i(W_1,\dots,W_d))|}{|E_{B_i}|} 
\end{equation}
and hence:
\begin{multline}\nonumber
disc_{H}(W_1,\dots,W_d) = \left|\frac{|E(W_1,\dots,W_d)|}{|E|} - \prod_{j=1}^{d}\frac{|W_j|}{|V_j|} \right| = \\
= \left|\frac{|E(W_1,\dots,W_d)|}{|E|} - \frac{|W_i|}{|V_i|}\cdot \frac{|E_i(W_1,\dots,W_d)|}{|E_i|} \right.  + \\
+ \left. \frac{|W_i|}{|V_i|} \cdot \left( \frac{|E_i(W_1,\dots,W_d)|}{|E_i|} - \prod_{j=1,j\neq i}^{d}\frac{|W_j|}{|V_j|} \right) \right| \leq \\
\leq \left|\frac{|E_{B_i}(W_1,\dots,W_d)|}{|E_{B_i}|} - \frac{|W_i|}{|V_i|}\cdot \frac{|E_i(W_1,\dots,W_d)|}{|E_i|} \right| + \\
+ \frac{|W_i|}{|V_i|} \cdot \left| \frac{|E_i(W_1,\dots,W_d)|}{|E_i|} - \prod_{j=1,j\neq i}^{d}\frac{|W_j|}{|V_j|} \right| = \\
= disc_{B_i}(W_i,E_i(W_1,\dots,W_d)) + \frac{|W_i|}{|V_i|}\cdot disc_{H_i}(W_1,\dots,W_{i-1},W_{i+1},\dots,W_d)
\end{multline}
\end{proof}

\begin{dfn}
A $d$-partite hypergraph $H$ is called type-regular if for any type $i$, $1 \leq i \leq d$, there exist $k_i,l_i\in\mathbb{N}$, 
such that each $i$-type vertex is contained in exactly $k_i$ hyperedges in $H$ and each cotype $i$ wall is contained in exactly $l_i$ hyperedges in $H$.
Note that if $H$ is type-regular, then each induced bipartite graph $B_i$, defined above, is $(k_i,l_i)$-biregular.
\end{dfn}
 
Recall that for a graph $G$ we denote by $\tilde{\lambda}(G)$ the normalized second largest eigenvalue. 
We can now generalize Corollary \ref{bipar} from graphs to hypergraphs.
\begin{cor} \label{disc}
Let $H$ be a $d$-partite type-regular hypergraph. Let $B_i=(V_i,E_i,E_{B_i})$, $i=1\ldots,d$, be the induced bipartite graphs of $H$, as defined above.
Then for any $W_1 \subseteq V_1 ,\ldots, W_d \subseteq V_d$, 
\begin{equation}
disc_H(W_1,\dots, W_d) \leq \sum_{i=1}^{d-1} \left( \tilde{\lambda}(B_i) \cdot \sqrt{\frac{|W_i|}{|V_i|}} \right) .
\end{equation}
In particular
\begin{equation}
\max_{W_i \subseteq V_i} disc_H(W_1,\dots, W_d) \leq (d-1) \cdot \max_{1 \leq i \leq d-1} \tilde{\lambda}(B_i).
\end{equation}
\end{cor}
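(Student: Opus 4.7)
The plan is to prove Corollary \ref{disc} by induction on $d$, peeling off one vertex type at a time via Lemma \ref{reduction} and controlling the resulting bipartite discrepancy at each step using Corollary \ref{bipar}. The base case $d=2$ is immediate: $H$ is itself a biregular bipartite graph, the associated bipartite graph $B_1$ coincides with $H$ (since walls of cotype $1$ are just singletons of $V_2$), and the claim follows from Corollary \ref{bipar} after absorbing $\sqrt{|W_2|/|V_2|}\leq 1$ into the bound.

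For the inductive step, applying Lemma \ref{reduction} with $i=d$ gives
\begin{equation*}
disc_H(W_1,\ldots,W_d) \;\leq\; disc_{B_d}\bigl(W_d, E_d(W_1,\ldots,W_d)\bigr) \;+\; \tfrac{|W_d|}{|V_d|}\, disc_{H_d}(W_1,\ldots,W_{d-1}).
\end{equation*}
Type-regularity of $H$ makes $B_d$ biregular, so Corollary \ref{bipar} together with the trivial estimate $|E_d(W_1,\ldots,W_d)|/|E_d|\leq 1$ bounds the first summand by $\tilde\lambda(B_d)\sqrt{|W_d|/|V_d|}$. The reduced hypergraph $H_d$ is a $(d-1)$-partite type-regular hypergraph on $V_1,\ldots,V_{d-1}$, so the inductive hypothesis provides a bound on $disc_{H_d}$ as a sum of $d-2$ terms of the form $\tilde\lambda(\,\cdot\,)\sqrt{|W_j|/|V_j|}$ involving the bipartite graphs that appear at the remaining peeling steps. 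Discarding the factor $|W_d|/|V_d|\leq 1$ and relabeling indices yields the desired sum of $d-1$ summands; the \emph{In particular} clause then follows by using $\sqrt{|W_i|/|V_i|}\leq 1$ and taking the maximum.

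The main technical point, and the one I expect to require the most care, is that the bipartite graphs produced by successive peelings are actually those of the reduced hypergraphs $H_d,\,H_{d,d-1},\ldots$, whose walls are of progressively higher codimension in $H$ and hence are not literally the $B_j$ of the original $H$. They are, however, natural quotients of the $B_j$'s obtained by forgetting the already-peeled coordinate of each wall; a short argument via equitable partitions — using the constant-degree property of higher-codimension walls that holds for our type-regular $H$ coming from the Bruhat--Tits building — shows that each such quotient has normalized second eigenvalue majorized by $\tilde\lambda(B_j)$, which is what permits the bound to be rewritten entirely in terms of the bipartite graphs of $H$. Controlling this comparison between the spectra of the $B_j$'s and those of their codimension-two-and-higher analogues is the heart of the argument.
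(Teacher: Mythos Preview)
Your inductive strategy has a genuine gap at the point you yourself flag as delicate. The corollary is stated for an arbitrary type-regular $d$-partite hypergraph $H$, but the reduced hypergraph $H_d$ need not be type-regular: the definition of type-regularity only controls how many facets contain a given vertex or a given codimension-$1$ wall, and says nothing about codimension-$2$ faces. Concretely, for $H_d$ to be type-regular you would need that every cotype-$\{j,d\}$ face of $H$ lies in a constant number of cotype-$d$ walls, and this does not follow from the hypotheses. So the inductive hypothesis is not available, and your subsequent equitable-partition comparison between $\tilde\lambda(B_j^{(H_d)})$ and $\tilde\lambda(B_j)$ likewise relies on exactly this missing higher-codimension regularity. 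Appealing to the building to supply it proves something weaker than the stated corollary.

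The paper avoids this detour entirely with one observation you are missing: for a type-regular $H$, since every cotype-$i$ wall lies in exactly $l_i$ facets, one has $l_i|E_i(W_1,\ldots,W_d)| = |E(W_1,\ldots,W_{i-1},V_i,W_{i+1},\ldots,W_d)|$ and $l_i|E_i|=|E|$, whence
\[
disc_{H_i}(W_1,\ldots,W_{i-1},W_{i+1},\ldots,W_d)\;=\;disc_{H}(W_1,\ldots,W_{i-1},V_i,W_{i+1},\ldots,W_d).
\]
This identity lets you iterate Lemma~\ref{reduction} \emph{within $H$ itself}, replacing $W_1$ by $V_1$, then $W_2$ by $V_2$, and so on; at every step the bipartite graph that appears is literally $B_i$ of $H$, so Corollary~\ref{bipar} applies directly with no spectral comparison needed. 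The iteration terminates because $disc_H(V_1,\ldots,V_{d-1},W_d)=0$ by the $k_d$-regularity of type-$d$ vertices. Replace your induction on $d$ with this ``substitute $V_i$ for $W_i$'' iteration and the proof goes through for general type-regular $H$.
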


\begin{proof}
Since $H$ is type-regular, for any type $i \in \mathbb{Z}/d\mathbb{Z}$, there exists a number $l_i \in \mathbb{N}$,
such that any wall of cotype $i$ is contained in exactly $l_i$ facets. Hence,
\begin{equation}
l_i|E_i(W_1,\dots,W_d)| = |E(W_1,\dots,W_{i-1},V_i,W_{i+1},\dots,W_d)| \quad \text{and} \quad l_i|E_i| = |E|,   
\end{equation}
and so,
\begin{equation} \label{eq-329}
\begin{split}
disc_{H_i}(W_1,\dots,W_d)= \left|\frac{|E_i(W_1,\dots,W_d)|}{|E_i|} - \prod_{j=1,j\neq i}^{d}\frac{|W_j|}{|V_j|}\right|=\\
= \left|\frac{|E(W_1,\dots,W_{i-1},V_i,W_{i+1},\dots,W_d)|}{|E|} - \frac{|V_i|}{|V_i|}\prod_{j=1,j\neq i}^{d}\frac{|W_j|}{|V_j|}\right|=\\
=disc_{H}(W_1,\dots,W_{i-1},V_i,W_{i+1},\dots,W_d).
\end{split}
\end{equation}
Corollary \ref{bipar} gives that for any $i=1,\ldots,d$,
\begin{equation}
disc_{B_i}(W_i,E_i(V_1,\ldots,V_{i-1},W_{i+1},\ldots,W_d)) \leq \tilde{\lambda}(B_i) \cdot \sqrt{\frac{|W_i|}{|V_i|}}
\end{equation} 
So, by iterating on Lemma \ref{reduction} and equation \eqref{eq-329}, we get
\begin{equation}
\begin{split}
disc_{H}(W_1,\dots, W_d) \leq \tilde{\lambda}(B_1) \cdot \sqrt{\frac{|W_1|}{|V_1|}} + disc_{H}(V_1,W_2,\dots ,W_d) \leq \\
 \leq \tilde{\lambda}(B_1) \cdot \sqrt{\frac{|W_1|}{|V_1|}} + \tilde{\lambda}(B_2) \cdot \sqrt{\frac{|W_2|}{|V_2|}} + disc_{H}(V_1,V_2,W_3,\dots ,W_d) \leq \\
\ldots \leq \sum_{i=1}^{d-1}\tilde{\lambda}(B_i) \cdot \sqrt{\frac{|W_i|}{|V_i|}} + disc_{H}(V_1,V_2,\dots ,V_{d-1},W_d) =\\
= \sum_{i=1}^{d-1} \tilde{\lambda}(B_i) \cdot \sqrt{\frac{|W_i|}{|V_i|}}
\end{split}
\end{equation}
The last equality follows from the fact that $disc_{H}(V_1,V_2,\dots ,V_{d-1},W_d)=0$, 
since any vertex $w$ of $W_d$ is contained in $k_d$ edges and for elements $w \neq w'$ of $W_d$ these edges are different.
\end{proof}


\section{Colorful Mixing Lemma for Ramanujan Complexes}\label{sec:Colorful Mixing Lemma}
The goal of this section is to prove the Colorful Mixing Lemma (Theorem \ref{colormix}). 

Let $F$ be a local non-archimedean field whose residue field is of order $q$, and let $\mathcal{B} = \mathcal{B}_d(F),\,d\geq 3,$ be the Bruhat-Tits building of type $\tilde{A}_{d-1}$ associated with $PGL_d(F)$. The building is equipped with a natural $d$-type function which gives it a structure of an infinite $d$-partite hypergraph.
For any cocompact lattice $\Gamma \leq PGL_d(F)$ preserving the type function, the quotient $\mathcal{B}_{\Gamma} = \Gamma \backslash \mathcal{B}_d(F)$ is a finite $d$-partite hypergraph. Recall the notation $dist(\Gamma) = \min_{x \in \mathcal{B},1 \ne \gamma \in \Gamma} \mbox{dist}(\gamma.x,x)$, and that the injectivity radius $r(\Gamma)$ of $\mathcal{B}_{\Gamma}$ is equal to $\lfloor \frac{dist(\Gamma)-1}{2} \rfloor$, since the building $\mathcal{B}$ is its universal cover. 
The Colorful Mixing Lemma reads as follows. Assuming that the injectivity radius of $\mathcal{B}_{\Gamma}$ is at least 2, for any choice of subsets $W_i$ of vertices of $\mathcal{B}_{\Gamma}$ of type $i$:
\begin{equation} \label{discmix}
disc_{\mathcal{B}_{\Gamma}}(W_1,\ldots,W_d) \leq \frac{2d}{q^{1/2}}.
\end{equation}

\subsection{The building of type $\tilde{A}_{d-1}$}
In this subsection we review the structure and basic properties of the building $\mathcal{B}_{d}(F)$. Rather than using the general language of buildings, we will present it and prove its properties from basic principles.

Let $F$ be a local non-archimedean field with a discrete valuation $\nu :F^* \rightarrow \mathbb{Z}$, let $\mathcal{O}$ be the ring of integers of $F$, $\pi$ a uniformizer and
$q < \infty$ the cardinality of the residue field $\bar{F} = \mathcal{O} / \pi \mathcal{O}$. For example, $F = \mathbb{F}_q((t))$ the Laurent series, $\nu = \deg$, $\mathcal{O} = \mathbb{F}_q[[t]]$ the Taylor series and $\pi = t$. 

The building $\mathcal{B}=\mathcal{B}_d(F)$ associated to a local field $F$ is an infinite $(d-1)$-dimensional pure simplicial complex constructed as follows.

\paragraph{Vertices.}
A \textit{lattice} is a free $\mathcal{O}$-submodule of $V= F^d$ of rank $d$, i.e. it is of the form $<v_1,\dots,v_d>=\mathcal{O}v_1 + \ldots + \mathcal{O}v_d$ where $\{v_1,\ldots,v_d\}$ is a basis for $V$. Two lattices $L_1,L_2$ are said to be equivalent if there exists $\lambda \in F^*$ such that $L_1 = \lambda L_2$. The equivalence class of a lattice $L$ is denoted by $[L]$. The set of equivalence classes of lattices forms the set of vertices of the building $\mathcal{B}$.

\paragraph{Faces.}
Two vertices $[L_1],[ L_2]$ are connected by an edge in the building, if there exist representatives $L_1' \in [L_1], L_2' \in [L_2]$ such that $\pi L_1' \subset L_2' \subset L_1'$. Note that $L_1' / \pi L_1'$ is a $d$-dimensional vector space over the finite field $\bar{F} = \mathcal{O}/\pi \mathcal{O}$. Fixing a representative $L_1'$ of a vertex $[L_1]$ gives rise to a one-to-one correspondence between the neighbors of $[L_1]$ and the proper subspaces of $L_1' / \pi L_1'$.

A set of vertices $\{[L_1],\ldots,[L_k]\}$ forms a $(k-1)$-face in the building if there exist representatives  $L_i' \in [L_i]$ such that (maybe, after renumbering) $\pi L_1' \subset L_k' \subset \ldots \subset L_2' \subset L_1'$. Note that a $(k-1)$-simplex in the building gives rise to a $k$-flag of subspaces in $L_1' / \pi L_1'$, hence the dimension of the building is $(d-1)$.

The link of every vertex of $\mathcal{B}$ is isomorphic to the flag complex of $\bar{F}^d = \mathbb{F}^{d}_{q}$.

\paragraph{Action of $GL_d(F)$.}
The group $GL_d(F)$ acts transitively on the lattices in $F^d$, and its center preserves the equivalence classes, hence this action induces an action of $G = PGL_d(F)$ on the vertices of the building. The stabilizer of the vertex $[\mathcal{O}^d]$, which is called \textit{the standard lattice}, is $K =PGL_d(\mathcal{O})$, hence the set $G/K$ may be identified with the set of vertices of the building. 


\paragraph{Type function.} For each vertex $[L]$ there exists an element $g\in G$ such that $[L]=g[\mathcal{O}^d]$. Define the type of $[L]$ to be $\tau([L]) = \nu(\det(g)) \mod d$. It is well defined since for $k\in K$ the determinant $\det(k) \in \mathcal{O}^*$ and hence $\nu(\det(k))=0$. This defines a type function from the vertices of the building to $\mathbb{Z} / d \mathbb{Z} $. Note that a maximal simplex contains vertices of all $d$ types.

For $i\in\mathbb{Z}/d\mathbb{Z}$, denote $G_i = (\nu\circ\det)^{-1}(i)$. Then $G_0$ is the subgroup of $G$ of type-preserving elements, and $G_i$ are its cosets. We saw before that the vertices of the building may be identified with $G/K$. Under this identification, $G_i/K$ is the set of vertices of type $i$.

The group $G_0$ of type-preserving elements is equal to $PSL_d(F) \cdot K$. It is a normal subgroup of $G=PGL_d(F)$ of index $d$, since $G_0 = \{g \in G \mid \nu(\det(g)) = 0 \mod d \}$.

Let now $\Gamma$ be a cocompact discrete subgroup of $G$ which is contained in $G_0$ and $\mathcal{B}_{\Gamma} = \Gamma \backslash \mathcal{B}$.
This is a finite simplicial complex and $\tau$ is well defined on $\mathcal{B}_{\Gamma}$.
The vertices $V$ of $\mathcal{B}_{\Gamma}$ may be identified with $\Gamma \backslash G / K$,  
and in this case the $i$-typed vertices $V_i$ of $\mathcal{B}_{\Gamma}$ are identified with  $\Gamma \backslash G_i / K$.

\paragraph{Relative position.} Define the set $A^{+} = \{a=\overline{(a_1,\ldots,a_d)} \in \mathbb{Z}^d / (1,\ldots,1)\mathbb{Z} \mid  a_1 \leq \ldots \leq a_d \}$, and let $\Lambda^+$ be the set of diagonal matrices in $PGL_d(F)$ of the form $\pi^{a}=\pi^{\overline{(a_1,\ldots,a_d)}} = diag(\pi^{a_1},\ldots,\pi^{a_d})$ for $\overline{(a_1,\ldots,a_d)} \in A^+$. Denote $A_0 = \{\overline{(a_1,\ldots,a_d)} \in A \mid \sum a_i \equiv 0 \mod d\}$.
The Cartan decomposition $G = K \Lambda^+ K$, means that each element $g \in G$ may be written uniquely as $g = k_1\pi^{a} k_2$ for $k_1,k_2 \in K$ and $a \in A^+$. By identifying the vertices of the building with $G/K$, for any two vertices $x=gK$ and $y = hK$ we define the \textit{relative position} of $y$ w.r.t. $x$, 
to be the unique element $a \in A^+$ such that $Kg^{-1}hK = K\pi^{a} K$. 
We get a function $\mathcal{B}(0) \times \mathcal{B}(0) \rightarrow A^+$, where $\mathcal{B}(0)$ is the set of vertices of $\mathcal{B}$.

In other words, for two vertices $x,y$ consider a basis $\{v_1,\ldots,v_d\}$ of $F^d$,
such that $x = [\mathcal{O}v_1 + \ldots + \mathcal{O}v_d]$ and $y = [\pi^{a_1} \mathcal{O}v_1 + \ldots + \pi^{a_d} \mathcal{O}v_d]$ (one can always find such a basis).
Let $g \in G=PGL_d(F)$ be the element which sends the standard basis to $\{v_1,\ldots,v_d\}$, and let $a=\overline{(a_1,\ldots,a_d)} \in A^+$.
Then $x = g.[\mathcal{O}^d]$ and $y = (g\pi^{a}).[\mathcal{O}^d]$, hence $"x^{-1}y" = \pi^{a}$, and the relative position of $y$ w.r.t. $x$ is $a$. We also see that in this case $\tau(x)-\tau(y)\equiv \sum a_i \mod d$, and the relative position of $x$ w.r.t. $y$ is $\overline{(0,a_d-a_{d-1},\dots,a_d - a_1)}$.
In addition, if $y$ is in relative position $\overline{(a_1,\dots,a_d)}$ w.r.t. $x$, then the distance between them, 
i.e. the number of edges in the shortest path connecting them, is equal to $dist(x,y)=a_d - a_1$. 
The action of $G$ on $\mathcal{B}$ preserves the relative position of pairs of vertices.

We note that by the Cartan decomposition, for any $a \in A^+$, $K$ acts transitively on the vertices of a fixed relative position $a$ w.r.t.
the standard lattice $x_0= [\mathcal{O}^d]$. By the transitivity of the action of $G$, for any vertex $x$, $K_x = Stab_G(x)$ acts transitively 
on the vertices of relative position $a$ w.r.t. $x$.

Various combinatorial aspects of the building can be expressed by the relative position:
\begin{lem} \label{neighboor}
Let $y$ be a vertex in the building with relative position $a=\overline{(a_1,\ldots,a_d)}$ w.r.t. $x$. Then:
\begin{enumerate}
\item $x$ and $y$ are neighbors if and only if $a_d = a_1 + 1$, i.e. $a = \overline{(0,\ldots,0,1\ldots,1)}$.
\item $x$ and $y$ are of the same type if and only if $\sum a_i = 0$ modulo $d$, i.e. $a \in A_0$.
\item $x$ and $y$ are separated by a common wall of codimension 1 
(i.e., there exists a $(d-2)$-face $\sigma$ such that $\sigma\cup\{x\}$ and $\sigma\cup\{y\}$ are both $(d-1)$-faces) 
if and only if either $a=\overline{(0,\ldots,0)}$, i.e. they coincide, or $a=\overline{(-1,0,\ldots,0,1)} = \overline{(0,1,\ldots,1,2)}$.
\end{enumerate}
\end{lem}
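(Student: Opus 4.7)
The strategy for all three parts is to work with a concrete pair of lattice representatives adapted to $a$: by the Cartan decomposition there is a basis $v_1, \ldots, v_d$ of $F^d$ for which $L_x = \bigoplus \mathcal{O}v_i$ represents $x$ and $L_y = \bigoplus \pi^{a_i}\mathcal{O}v_i$ represents $y$. Parts (1) and (2) then fall out quickly from this normal form; the real work is in (3).

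For (1), two vertices are adjacent iff there are representatives $L_x', L_y'$ with $\pi L_x' \subset L_y' \subset L_x'$; rescaling $L_y$ by $\pi^{-a_1}$ and reading off the coordinates in the chosen basis shows this is equivalent to $a_i - a_1 \in \{0, 1\}$ for all $i$, i.e.\ $a_d = a_1 + 1$. The converse uses the elementary divisor theorem to simultaneously diagonalize any pair of representatives witnessing adjacency. Part (2) is essentially recorded in the paragraph just above the lemma: with $y = g\pi^a.[\mathcal{O}^d]$ and $x = g.[\mathcal{O}^d]$, one has $\tau(y) - \tau(x) \equiv \nu(\det \pi^a) = \sum a_i \pmod d$.

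For (3), the case $a = 0$ gives $x = y$, trivially sharing any wall of any chamber through $x$. For the forward direction $a = \overline{(-1, 0, \ldots, 0, 1)} \Rightarrow$ common wall, I take $L_x = \mathcal{O}^d$ and $L_y = \pi^{-1}\mathcal{O}e_1 + \mathcal{O}e_2 + \cdots + \mathcal{O}e_{d-1} + \pi\mathcal{O}e_d$, define $L_1 = L_x \cap L_y$ and $L_{d-1} = \pi(L_x + L_y)$, and verify by a direct computation that $L_x/L_1 \cong L_y/L_1 \cong \bar F$, $L_{d-1}/\pi L_x \cong L_{d-1}/\pi L_y \cong \bar F$, and $L_1/L_{d-1} \cong \bar F^{d-2}$; interpolating any full $\bar F$-flag inside $L_1/L_{d-1}$ yields $L_2, \ldots, L_{d-2}$ and hence a wall $\sigma = \{[L_1], \ldots, [L_{d-1}]\}$ that extends to a chamber through $x$ and also through $y$. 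For the converse, assume $x \neq y$ share a wall; by transitivity of $G$ on vertices and of $K = \mathrm{Stab}_G(x)$ on chambers through $x$, I normalize $L_x = \mathcal{O}^d$ and the wall to $\{[L_i^{\mathrm{std}}]\}_{i=1}^{d-1}$ with $L_i^{\mathrm{std}} = \mathrm{span}(\pi e_1, \ldots, \pi e_i, e_{i+1}, \ldots, e_d)$. A representative $L_y$ of $y$ then sits in $L_1 \subset L_y \subset \pi^{-1}L_{d-1}$ as a line in $\pi^{-1}L_{d-1}/L_1 \cong \bar F^2$ (basis $\bar e_1, \overline{\pi^{-1}e_d}$). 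The line $[1:0]$ gives $L_y = L_x$; any other line yields $L_x \cap L_y = L_1$ and $L_x + L_y = \pi^{-1}L_{d-1}$. Now apply the elementary divisor theorem to $(L_x, L_y)$: if the diagonal exponents are $b_1 \leq \cdots \leq b_d$, then $\max(0, b_i)$ controls the intersection and $\min(0, b_i)$ the sum, so the indices $[L_x : L_1] = q$ and $[L_x + L_y : L_x] = q$ force exactly one $b_i = 1$, one $b_j = -1$, and the rest zero, giving $a = \overline{(-1, 0, \ldots, 0, 1)}$.

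The main obstacle is the converse of (3): one must handle not just the two axial lines in $\bar F^2$ (which give $y = x$ and a visibly diagonal $L_y$) but also the generic line $[1:\mu]$ with $\mu \in \bar F^*$, whose corresponding $L_y$ is not simultaneously diagonalizable with $L_x$ in the standard basis $\{e_i\}$. The cleanest way around this, as above, is to avoid constructing a simultaneous diagonal basis by hand and instead to compute $L_x \cap L_y$ and $L_x + L_y$ for a generic line (both reduce to $L_1$ and $\pi^{-1}L_{d-1}$) and to extract the relative position from these two indices via the elementary divisor theorem.
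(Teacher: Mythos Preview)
Your proof is correct. Parts (1), (2), and the forward half of (3) match the paper essentially verbatim. For the converse of (3) you and the paper both normalize to $x=[\mathcal{O}^d]$ with $\sigma$ the standard wall and then parametrize the vertices $y$ extending $\sigma$ to a chamber by $\mathbb{P}^1(\bar F)$; the only difference is in how the relative position of a generic $y$ is verified. The paper takes precisely the route you chose to avoid: for each $\varepsilon\in\bar F$ it exhibits the explicit change of basis $\{v_1+\varepsilon\pi v_d,\,v_2,\ldots,v_d\}$ for $L_x$, in which $L_y$ becomes diagonal with exponents $(0,1,\ldots,1,2)$. Your computation $[L_x:L_x\cap L_y]=[L_x+L_y:L_x]=q$ followed by the elementary divisor theorem is a clean alternative that handles all $q$ non-trivial lines uniformly without having to guess the simultaneous diagonal basis; the paper's explicit basis change, on the other hand, is entirely self-contained and yields as a byproduct the complete list of the $q+1$ chambers through $\sigma$ (the count $q+1$ coming from the link of a vertex being the flag complex of $\bar F^d$), which is used later in Lemma~\ref{adj}.
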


\begin{proof}
Statements (1) and (2) follow immediately from the definition of the relative position and the discussion above.

To prove (3), assume first that $y$ is in relative position $\overline{(0,1,\dots,1,2)}$ w.r.t. $x$. This implies that $x$ has a representative $L$ with an $\mathcal{O}$-basis $\{v_1,\dots,v_d\}$ such that $L'=<v_1,\pi v_2,\dots,\pi v_{d-1},\pi^2 v_d>$ represents $y$. For $i=1,\dots,d-1$, denote $L_i = <v_1,\dots,v_i,\pi v_{i+1},\dots,\pi v_d>$. Then the set $\sigma = \{z_i=[L_i]\mid 1\leq i\leq d-1\}$ forms a $(d-2)$-cell, and both $\sigma\cup\{x\}$ and $\sigma\cup\{y\}$ are facets of $\mathcal{B}$, since
\begin{equation}
\pi L\subset L_1 \subset \dots L_{d-1}\subset L \quad\mbox{ and }\quad \pi L_{d-1} \subset L'\subset L_1 \subset \dots \subset L_{d-1}.
\end{equation}

To see the opposite direction, assume that there exists a $(d-2)$-cell $\sigma$ with both $\sigma\cup\{x\}$ and $\sigma\cup\{y\}$ being facets of $\mathcal{B}$. As the link of every vertex of $\mathcal{B}$ is the flag complex of $\mathbb{F}^{d}_{q}$, one can decuce that $\sigma$ is contained in $(q+1)$ facets. The fact that $\sigma\cup\{x\}$ is a facet, implies that there exists a representative $L$ of $x$ with an $\mathcal{O}$-basis $\{v_1,\dots,v_d\}$ such that $\sigma = \{z_i=[L_i]\mid 1\leq i\leq d-1\}$, where $L_i = <v_1,\dots,v_i,\pi v_{i+1},\dots,\pi v_d>$.

Here is a list of representatives of $(q+1)$ vertices $\left\{y_\varepsilon = [L'_\varepsilon] \mid \varepsilon\in\mathbb{F}_q\cup\{\infty\}\right\}$ 
such that $\sigma\cup\{y_\varepsilon\}$ is a facet of $\mathcal{B}$:
\begin{equation}
L'_\infty = <\pi v_1,\dots,\pi v_d>\quad (\mbox{note that } y_\infty=[L'_\infty]=x)
\end{equation}
and for $\varepsilon\in\mathbb{F}_q$
\begin{equation}
 L'_\varepsilon = <v_1 + \varepsilon\cdot\pi v_d,\pi v_2,\dots,\pi v_{d-1},\pi^2 v_d>.
\end{equation}
One can easily check that all the $y_\varepsilon$'s are not equivalent and $\sigma\cup\{y_\varepsilon\}$ is a facet, so these are all the facets containing $\sigma$. For every $\varepsilon\in\mathbb{F}_q$, $y_\varepsilon$ is in relative position $\overline{(0,1,\dots,1,2)}$ w.r.t. $x$. This can be seen by taking $\{v_1 + \varepsilon\cdot\pi v_d,v_2,\dots,v_d\}$ as a basis for $L$.
\end{proof}

\subsection{Hecke operators}
For any $a=\overline{(a_1,\ldots,a_d)} \in A^+$, define the following Hecke operator on the vertices of the building $H_a : L^2(\mathcal{B}(0)) \to L^2(\mathcal{B}(0))$,
\begin{equation}\nonumber
H_{a} f(xK) = \frac{1}{\mu(K\pi^a K)} \sum_{yK \in xK \pi^a K} f(yK), 
\end{equation}
where $\mu$ is the Haar measure on $G$, normalized such that $\mu(K)=1$, (i.e. $\mu(K\pi^a K) = |K\pi^a K/K|$).
This is the normalized finite sum over the vertices $yK$ of relative position $a$ w.r.t. $xK$. Note that $\mu(K\pi^a K)$ is equal to the number of vertices which are of relative position $a$ w.r.t. $x$.

If $\Gamma$ is a lattice in $G$ with $dist(\Gamma) > a_d - a_1$, then the action of $\Gamma$ commutes with $H_a$. 
Hence, we can consider $H_a$ also as a map $H_a:L^2(V)\to L^2(V)$ where $V$ is the set of vertices of $\Gamma \backslash \mathcal{B}$, so
\begin{equation}\nonumber
H_{a} f(\Gamma x K) = \frac{1}{\mu(K\pi^a K)} \sum_{\Gamma y K \in \Gamma xK\pi^a K} f(\Gamma yK) .
\end{equation}

Moreover if $a \in A_0$, then the type of each $yK \in xK\pi^a K$ is the same as that of $xK$,
so we may consider $H_a$ as a map $H_a:L^2(V_i)\to L^2(V_i)$, where $V_i$ is the set of vertices of type $i$.

Finally, note that if we consider $f \in L^2(\Gamma \backslash G /K)$, 
as a $K$-invariant function in $L^2(\Gamma \backslash G)$, 
and $dk$ is the Haar measure on $K$, normalized such that $dk(K) = 1$,
we may write $H_a$ as an integral over $K$, instead of a sum,
\begin{equation}
H_af(x) = \int_K f(xk\pi^a ) dk.
\end{equation}

Let $(\rho,L^2(\Gamma \backslash G))$ be the unitary $G$-representation, given by right translations $\rho(g) f (x) = f(xg)$.
The following lemma will allow us to give bounds on the spectra of the Hecke operators, assuming we have bounds on the matrix coefficients of the representation 
$L^2(\Gamma \backslash G)$. Bounds on the matrix coefficients will be given at the end of this section, using a theorem by Oh.

\begin{lem} \label{pre-Hecke}
Let $a \in A^+$. For any $K$-invariant vectors $f_1,f_2$ in $L^2(\Gamma \backslash G)$,
\begin{equation}
\langle H_af_1,f_2 \rangle =  \langle \rho(\pi^a)f_1,f_2 \rangle
\end{equation}
\end{lem}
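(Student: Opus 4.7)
The plan is to unfold both sides as integrals on $\Gamma\backslash G$ and show they agree, using $K$-invariance of $f_2$ together with the unimodularity of $G=PGL_d(F)$ (so that the Haar measure on $G$ is right-invariant as well as left-invariant, and descends to a well-defined measure on $\Gamma\backslash G$).

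First, I would start from the integral formulation of the Hecke operator already given right before the lemma:
\begin{equation}\nonumber
(H_a f_1)(x) \;=\; \int_K f_1(x k \pi^a)\,dk.
\end{equation}
Plugging this into the inner product and using Fubini (everything is bounded on the compact $K$ and $f_1,f_2 \in L^2$),
\begin{equation}\nonumber
\langle H_a f_1, f_2 \rangle
\;=\; \int_{\Gamma\backslash G} \int_K f_1(xk\pi^a) \,\overline{f_2(x)}\, dk\, dx
\;=\; \int_K \int_{\Gamma\backslash G} f_1(xk\pi^a)\,\overline{f_2(x)}\, dx\, dk.
\end{equation}

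The key step is now to use that $f_2$ is $K$-invariant: for every $k\in K$ one has $\overline{f_2(x)} = \overline{f_2(xk)}$, so the inner integral becomes $\int_{\Gamma\backslash G} f_1(xk\pi^a)\,\overline{f_2(xk)}\,dx$. Substituting $y = xk$ and using that the Haar measure on $G$ is right-$K$-invariant (which follows from the unimodularity of $PGL_d(F)$), hence so is the quotient measure on $\Gamma\backslash G$, this inner integral equals
\begin{equation}\nonumber
\int_{\Gamma\backslash G} f_1(y\pi^a)\,\overline{f_2(y)}\,dy \;=\; \langle \rho(\pi^a) f_1, f_2\rangle,
\end{equation}
which is independent of $k$. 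Pulling it out of the outer integral and using the normalization $\int_K dk = 1$ yields the claim.

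There is no serious obstacle here; the only subtle point is making sure the change of variable $x\mapsto xk$ is legitimate on $\Gamma\backslash G$, which requires the right-invariance of the quotient Haar measure — this is where unimodularity of $G$ is invoked. The trivial-looking identity $\mu(K\pi^a K) = |K\pi^a K/K|$ coming from the Haar normalization $\mu(K)=1$ is what reconciles the two definitions of $H_a$ (the finite sum form and the integral form), so one does not need to separately track the normalization constant in the sum formulation.
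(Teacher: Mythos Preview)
Your proof is correct and follows essentially the same route as the paper: both unfold $\langle H_a f_1, f_2\rangle$ via the integral form of $H_a$, swap the order of integration by Fubini, and then use the right-invariance of the quotient measure together with the $K$-invariance of $f_2$ to eliminate the $k$-dependence. The only cosmetic difference is that you first insert $\overline{f_2(x)} = \overline{f_2(xk)}$ and then substitute $y = xk$, whereas the paper substitutes first and then invokes $K$-invariance; you also make explicit the appeal to unimodularity of $G$, which the paper leaves implicit.
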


\begin{proof}
\begin{equation}
\begin{split}
\langle H_af_1,f_2 \rangle = \int_{\Gamma \backslash G} H_a\left(f_1(x)\right)\overline{f_2(x)}dx = \\
= \int_{\Gamma \backslash G} \left(\int_K f_1(xk\pi^a )dk\right) \overline{f_2(x)}dx = \\
= \int_K \left(\int_{\Gamma \backslash G} f_1(xk\pi^a )\overline{f_2(x)}dx\right)dk,
\end{split}
\end{equation}
where the last equality follows from Fubini's theorem.
Since the measure $dx$ is right invariant we can replace $x$ by $xk$, and by the $K$-invariance of $f_2$ we get
\begin{equation}\nonumber
\begin{split}
\langle H_af_1,f_2 \rangle =  \int_K \left(\int_{\Gamma \backslash G} f_1(x\pi^a )\overline{f_2(x)}dx\right)dk =\\
= \int_{\Gamma \backslash G} f_1(x\pi^a )\overline{f_2(x)}dx = \langle \rho(\pi^a )f_1 , f_2 \rangle.
\end{split}
\end{equation}
\end{proof}

\begin{lem} \label{G^+K=G_0}
Let $i\in\mathbb{Z}/d\mathbb{Z}$ be a type and $f \in L^2_0(V_i)$, where $L^2_0(V_i)=\{g\in L^2(V_i)\mid \langle g, \mathds{1}_{V_{i}}\rangle=0\}$. 
Extend $f$ to a function in $L^2_0(V)$ by setting it to be zero outside $V_i$.
Recall that $V = \Gamma \backslash G /K$, where $K=PGL_d(\mathcal{O})$, and consider $f$ as a $K$-invariant vector in $L^2(\Gamma \backslash G)$.
Then $f$ is orthogonal to any $G^+$-invariant vector, where $G^+=PSL_d(F)$.
\end{lem}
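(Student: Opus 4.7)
The plan is to average $h$ over the right action of $K$, check that the averaged function $\tilde h$ is simultaneously $K$- and $G^+$-invariant (hence $G_0$-invariant via the identity $G_0=G^+K$ recalled in Section 4.1), and then exploit $\Gamma\subseteq G_0$ to conclude that $\tilde h$ depends only on the type. At that point the mean-zero condition $f\in L^2_0(V_i)$ will kill the pairing.

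Concretely, I would set $\tilde h(x)=\int_K h(xk)\,dk$. By construction $\tilde h$ is right $K$-invariant. The key observation is that averaging preserves $G^+$-invariance: for $g\in G^+$, normality of $G^+=PSL_d(F)$ in $G=PGL_d(F)$ gives $k^{-1}gk\in G^+$, so
\[
h(xgk)=h(xk\cdot(k^{-1}gk))=h(xk),
\]
and integrating in $k$ yields $\tilde h(xg)=\tilde h(x)$. Since $f$ is $K$-invariant and the $K$-averaging operator is self-adjoint, $\langle f,h\rangle=\langle f,\tilde h\rangle$.

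Next, because $\tilde h$ is invariant under both $K$ and $G^+$, it is invariant under the subgroup they generate, which by the identity $G^+K=G_0$ is precisely $G_0$. Combined with $\Gamma\subseteq G_0$ (from type-preservation), $\tilde h$ descends to a function on $\Gamma\backslash G/G_0=G/G_0$, and the latter is identified with $\mathbb{Z}/d\mathbb{Z}$ via $\nu\circ\det$. Thus there are constants $c_0,\dots,c_{d-1}\in\mathbb{C}$ with $\tilde h\equiv c_j$ on $\Gamma\backslash G_j$.

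To finish, since $f$ was extended by zero off $V_i=\Gamma\backslash G_i/K$, the inner product collapses to $\langle f,\tilde h\rangle=\overline{c_i}\int_{\Gamma\backslash G_i}f(x)\,dx$. The injectivity-radius hypothesis guarantees that $\Gamma$ acts freely on the vertices of $\mathcal{B}$, so together with $\mu(K)=1$ this integral equals $\sum_{v\in V_i}f(v)=\langle f,\mathds{1}_{V_i}\rangle$, which vanishes by hypothesis. The only step that needs genuine care is the $K$-averaging argument, where normality of $PSL_d(F)$ in $PGL_d(F)$ is used to commute a $G^+$-element past an element of $K$; the rest is bookkeeping.
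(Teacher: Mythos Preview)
Your argument is correct and follows essentially the same route as the paper: define $\tilde h(x)=\int_K h(xk)\,dk$, observe it is $G_0$-invariant via $G_0=G^+K$ and hence constant on each $\Gamma\backslash G_j$, show $\langle f,h\rangle=\langle f,\tilde h\rangle$, and conclude from $f\in L^2_0(V_i)$. Your explicit use of normality of $PSL_d(F)$ to verify that $K$-averaging preserves right $G^+$-invariance, and your phrasing of $\langle f,h\rangle=\langle f,\tilde h\rangle$ as self-adjointness of the averaging projector, are exactly what underlies the paper's terser statements; the appeal to the injectivity-radius hypothesis in the last step is harmless but not actually needed, since $K$-invariance of $f$ and $\mu(K)=1$ already identify $\int_{\Gamma\backslash G_i}f\,dx$ with $\langle f,\mathds{1}_{V_i}\rangle$.
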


\begin{proof}
Let $h \in L^2(\Gamma \backslash G)$ be a $G^+$-invariant function. Define $\tilde{h}(x) = \int_Kh(xk)dk$. 
Recall that the type-preserving subgroup $G_0$ is equal to $ G^+K$.
So, $\tilde{h}$ is $G^+$-invariant and $K$-invariant, and hence $G_0$-invariant.
Since each $G_i$ is a $G_0$-coset, $\tilde{h}$ is constant on each $\Gamma \backslash G_i$ and hence on $V_i = \Gamma \backslash G_i/K$.
Now, since $f \in L^2_0(V_i)$ we get that 
\[ \langle f , \tilde{h} \rangle = 0.\]
On the other hand, 
\begin{multline} \nonumber
\langle f ,h \rangle = \int_{\Gamma \backslash G} f(x)\overline{h(x)}dx = \int_K\left(\int_{\Gamma \backslash G} f(x)\overline{h(x)}dx\right)dk = \\
= \int_K\left(\int_{\Gamma \backslash G} f(xk^{-1})\overline{h(x)}dx\right)dk = \int_K\left(\int_{\Gamma \backslash G} f(y)\overline{h(yk)}dy\right)dk = \\
= \int_{\Gamma \backslash G} f(y)\overline{\left(\int_K h(yk)dk\right)}dy = \int_{\Gamma \backslash G} f(y)\overline{\tilde{h}(y)}dy = \langle f ,\tilde{h} \rangle,
\end{multline}
where again we used: $\int_Kdk =1$, the $K$-invariance of $f$, the Haar measure $dx$ being right invariant and Fubini's Theorem, respectively.
So $f$ is orthogonal to $h$, which proves the claim.
\end{proof}

Recall that $A_0=\{\overline{(a_1,\ldots,a_d)} \in A | \sum a_i  \equiv 0 (\mbox{mod }d)\}$, is the set of type-preserving translations (see Lemma \ref{neighboor}).
So, for any $a \in A_0$ the Hecke operator $H_a$ is a well defined operator from $L^2(V_i)$ to itself, for any type $i$.

Combining Lemmas \ref{pre-Hecke} and \ref{G^+K=G_0} we get the following bound on the norm of the Hecke operator in terms of the matrix coefficients.
\begin{cor} \label{Hecke}
For any type $i\in\mathbb{Z}/d\mathbb{Z}$ and any $a \in A_0$,
\begin{equation}
\|H_a\|_{L^2_0(V_i)} \leq \sup_{f_1,f_2} \langle \rho(\pi^a)f_1,f_2 \rangle
\end{equation}
where $f_1,f_2$ run over all the $K$-invariant normalized vectors in $L^2(\Gamma \backslash G_i)$ orthogonal to any $G^+$-invariant vector 
(when considered as functions in $L^2(\Gamma\backslash G)$ as in Lemma \ref{G^+K=G_0}.)
\end{cor}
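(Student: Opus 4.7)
The plan is to assemble Corollary \ref{Hecke} from the two lemmas that immediately precede it, essentially as a packaging statement: Lemma \ref{pre-Hecke} rewrites the matrix coefficient of $H_a$ in terms of the regular representation, and Lemma \ref{G^+K=G_0} identifies which subspace of $L^2(\Gamma\backslash G)$ the relevant test vectors live in.

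First I would express the operator norm variationally. Since $a\in A_0$, the Hecke operator $H_a$ preserves the type function and hence maps $L^2(V_i)$ into itself; moreover it obviously maps the constant function $\mathds{1}_{V_i}$ to a multiple of itself (because every vertex has the same number of vertices at relative position $a$), so $H_a$ preserves the orthogonal complement $L^2_0(V_i)$. Consequently
\[
\|H_a\|_{L^2_0(V_i)} = \sup_{f_1,f_2}\,\bigl|\langle H_a f_1,f_2\rangle\bigr|,
\]
where $f_1,f_2$ range over unit vectors in $L^2_0(V_i)$. By possibly replacing $f_1$ by a scalar of modulus one, the absolute value can be removed on the right.

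Next I would embed these test functions into $L^2(\Gamma\backslash G)$. Via the identification $V_i=\Gamma\backslash G_i/K$, every $f\in L^2_0(V_i)$ extends (by zero off $\Gamma\backslash G_i$) to a $K$-invariant vector in $L^2(\Gamma\backslash G)$, which we continue to call $f$; the extension preserves the $L^2$-norm. This is exactly the setting of Lemma \ref{G^+K=G_0}, whose conclusion is that any such $f$ is orthogonal to every $G^+$-invariant vector. Hence the admissible pairs $(f_1,f_2)$ in the statement of the corollary include all the pairs appearing in the supremum above.

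Finally, Lemma \ref{pre-Hecke} gives, for each admissible pair,
\[
\langle H_a f_1,f_2\rangle = \langle \rho(\pi^a)f_1,f_2\rangle,
\]
so taking the supremum over unit vectors in $L^2_0(V_i)$ yields the inequality
\[
\|H_a\|_{L^2_0(V_i)} \leq \sup_{f_1,f_2}\,\langle \rho(\pi^a)f_1,f_2\rangle,
\]
where the right-hand supremum is now allowed to run over the larger class of $K$-invariant unit vectors in $L^2(\Gamma\backslash G_i)$ orthogonal to every $G^+$-invariant vector, exactly as claimed. Since the argument is a two-line concatenation of Lemmas \ref{pre-Hecke} and \ref{G^+K=G_0}, there is no real obstacle; the only minor point worth checking carefully is that $H_a$ indeed stabilizes $L^2_0(V_i)$, which uses $a\in A_0$ together with the type-transitivity of the $K$-orbit of $\pi^a$.
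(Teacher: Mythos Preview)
Your proposal is correct and follows essentially the same route as the paper: pick unit vectors $f_1,f_2\in L^2_0(V_i)$ realizing the operator norm, apply Lemma~\ref{pre-Hecke} to rewrite $\langle H_af_1,f_2\rangle$ as $\langle \rho(\pi^a)f_1,f_2\rangle$, and invoke Lemma~\ref{G^+K=G_0} to place $f_1,f_2$ in the class over which the right-hand supremum is taken. Your write-up is simply more explicit about why $H_a$ stabilizes $L^2_0(V_i)$ and about the embedding into $L^2(\Gamma\backslash G)$, points the paper leaves implicit.
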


\begin{proof}
Let $f_1,f_2 \in L^2_0(V_i)$ of norm $1$ be such that $\|H_a\|_{L^2_0(V_i)} = \langle H_af_1,f_2 \rangle$. By Lemma \ref{pre-Hecke},
\[\|H_a\|_{L^2_0(V_i)}  = \langle \rho(\pi^a)f_1,f_2 \rangle\]
and by Lemma \ref{G^+K=G_0}, $f_1,f_2$ are orthogonal to any $G^+$-invariant vector in $L^2(\Gamma \backslash G)$, which proves the claim.
\end{proof}

\subsection{Adjacency operators}

Again let $\Gamma$ be a cocompact lattice in $G$ with $\Gamma\subseteq G_0$, i.e. $\Gamma$ preserves the type function. 
So $\mathcal{B}_\Gamma = \Gamma\backslash \mathcal{B}$ is a $d$-partite hypergraph.

Recall that for each type $i\in\mathbb{Z}/d\mathbb{Z}$, the induced bipartite graph $B_i$ of the $d$-partite hypergraph $\mathcal{B}_{\Gamma}$, 
has the $i$-type vertices $V_i$ on one side, and the walls $E_i$, i.e. the simplicies of dimension $(d-2)$ of cotype $i$, on the other side. 
A vertex and a wall are connected if their union forms a maximal simplex in $\mathcal{B}_\Gamma$.

Let $\widetilde{A} = \widetilde{A}(B_i)$ be the normalized adjacency operator of $B_i$, i.e. 
\[ \widetilde{A}f(x) := \frac{1}{\deg(x)}\sum_{y \sim x}f(y), \]
where the summation is over all the neighbors of $x$ in $B_i$. In the natural basis the matrix of $\widetilde{A}$ is a block matrix of the form 
\begin{equation}\nonumber
\widetilde{A} = \left( \begin{array}{cc} 0 & N \\ N^t & 0 \end{array} \right),
\end{equation}
where $N$ is a matrix of size $|V_i|\times |E_i|$.

Define $D_i$ to be the multigraph on $V_i$, where two vertices are connected by as many edges as there are paths of length 2 in the graph $B_i$ connecting them. 
Then the matrix $N N^t$ is the matrix of the normalized adjacency operator of the multigraph $D_i$. 
Note that the number of loops on each vertex in $D_i$ is equal to the vertex degree in $B_i$.

The non-zero eigenvalues of the matrices $N N^t$ and $N^t N$ coincide, 
and $\lambda \ne 0$ is an eigenvalue of $N N^t$ if and only if $\sqrt{\lambda}$ is an eigenvalue of $\widetilde{A}$.
So, in order to bound the eigenvalues of $\widetilde{A}$, it is enough to bound the eigenvalues of $NN^t$. 
 
\begin{lem} \label{adj}
The operator $NN^t$, as an operator from $L^2(V_i)$ to itself, is a convex sum of two Hecke operators $I = H_{(0,\ldots,0)}$ and $H_{(-1,0,\ldots,0,1)}$, in fact,
\begin{equation}\nonumber
NN^t = \frac{1}{q+1} I + \frac{q}{q+1} H_{(-1,0,\ldots,0,1)}.
\end{equation}
\end{lem}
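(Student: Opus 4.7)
The plan is to compute $NN^{t}$ entrywise and recognize the resulting matrix as the claimed convex combination of $I$ and $H_{\overline{(-1,0,\ldots,0,1)}}$. Write $M$ for the $\{0,1\}$ incidence matrix of $B_{i}$ between $V_{i}$ and $E_{i}$; the (symmetrically) normalized adjacency of a biregular bipartite graph is $N=(k_{i}l_{i})^{-1/2}M$, where $k_{i}$ denotes the common degree of a type-$i$ vertex in $B_{i}$ and $l_{i}$ the common degree of a cotype-$i$ wall. The proof of Lemma \ref{neighboor}(3), applied in the universal cover $\mathcal{B}$ (which is legitimate because the injectivity radius is $\geq 2$), shows that every cotype-$i$ wall lies in exactly $q+1$ facets; hence $l_{i}=q+1$ and
\[
NN^{t}=\frac{1}{k_{i}(q+1)}MM^{t}.
\]
The entry $(MM^{t})_{v,v'}$ equals the number of walls $\sigma\in E_{i}$ with both $\sigma\cup\{v\}$ and $\sigma\cup\{v'\}$ being facets of $\mathcal{B}_{\Gamma}$.

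By Lemma \ref{neighboor}(3), this count vanishes unless either $v=v'$ or $v'$ is in relative position $a:=\overline{(-1,0,\ldots,0,1)}$ with respect to $v$. On the diagonal, $(MM^{t})_{v,v}=k_{i}$, which produces the $\frac{1}{q+1}I$ term. Off the diagonal, the transitivity of $K_{v}:=\mathrm{Stab}_{G}(v)$ on the set of vertices at any fixed relative position to $v$ forces the count $c:=(MM^{t})_{v,v'}$ to be independent of the choice of $v'$ at position $a$. I would determine $c$ by a double count of pairs $(\sigma,v')$ where $\sigma$ is a cotype-$i$ wall containing $v$ in a facet, $v'\neq v$ is of type $i$, and $\sigma\cup\{v'\}$ is also a facet: summing over $\sigma$ gives $k_{i}\cdot q$ (each of the $k_{i}$ walls at $v$ sits in $q+1$ facets, of which $q$ have a fresh type-$i$ vertex, all at position $a$ from $v$ by the enumeration in the proof of Lemma \ref{neighboor}(3)), while summing over $v'$ gives $c\cdot n_{a}$, where $n_{a}=\mu(K\pi^{a}K)$ is the size of the $a$-sphere around $v$. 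Hence $c\,n_{a}=k_{i}q$.

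Putting these pieces together, for any $f\in L^{2}(V_{i})$,
\[
(NN^{t}f)(v)=\frac{k_{i}}{k_{i}(q+1)}f(v)+\frac{c}{k_{i}(q+1)}\sum_{v'}f(v')=\frac{1}{q+1}f(v)+\frac{q}{q+1}\cdot\frac{1}{n_{a}}\sum_{v'}f(v'),
\]
where $v'$ runs over vertices at position $a$ from $v$; by the definition of $H_{a}$ this is exactly $\bigl(\tfrac{1}{q+1}I+\tfrac{q}{q+1}H_{a}\bigr)f(v)$.

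The main obstacle is careful bookkeeping of three distinct normalizations (the $1/\sqrt{k_{i}l_{i}}$ hidden in $N$, the $1/\mu(K\pi^{a}K)$ built into $H_{a}$, and the wall-degree $l_{i}=q+1$), together with the verification that the combinatorial counts performed in the building $\mathcal{B}$ descend faithfully to the quotient $\mathcal{B}_{\Gamma}$: this is precisely what the injectivity radius $\geq 2$ hypothesis delivers, since then the radius-$2$ neighborhood of any vertex in $\mathcal{B}_{\Gamma}$ lifts isomorphically to $\mathcal{B}$, so shared walls and relative-position counts are preserved under the covering map.
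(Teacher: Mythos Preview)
Your proof is correct and follows essentially the same route as the paper's: both identify the support of $NN^{t}$ via Lemma~\ref{neighboor}(3), use the injectivity radius $\geq 2$ to transfer the local counts from $\mathcal{B}$ to $\mathcal{B}_{\Gamma}$, and read off the diagonal coefficient $\tfrac{1}{q+1}$ from $l_{i}=q+1$. The only cosmetic difference is that you determine the off-diagonal coefficient by the explicit double count $c\,n_{a}=k_{i}q$ (together with $K_{v}$-transitivity to see that $c$ is constant), whereas the paper short-circuits this by invoking the convexity constraint $c_{\overline{(0,\ldots,0)}}+c_{\overline{(-1,0,\ldots,0,1)}}=1$.
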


\begin{proof}
By Lemma \ref{neighboor}, two vertices of type $i$ of the building share a common wall if and only if their relative position is either 
$\overline{(0,\ldots,0)}$ or $\overline{(-1,0,\ldots,0,1)}$,
i.e. a vertex $xK$ of type $i$ shares a common wall with vertices which are either the right $K$-cosets in $xK\pi^{(-1,0,\ldots,0,1)}K$ or $xK$ itself.

In the quotient $\mathcal{B}_{\Gamma}$, the vertex $\Gamma x K$ of type $i$ can be lifted to the vertex $xK$ in the building. The $i$-type vertices in the building which
share a common wall with $xK$ are mapped surjectively to the vertices in the quotient which share a common wall with $\Gamma xK$ in the quotient. 
Since $\Gamma$ has injectivity radius $\geq 2$, this map is also injective.

Hence, after the normalization, by the definition of the Hecke operators we get, that 
\begin{equation}\nonumber
NN^t = c_{(0,\ldots,0)} H_{(0,\ldots,0)} + c_{(-1,0,\ldots,0,1)} H_{(-1,0,\ldots,0,1)},
\end{equation}
where $c_{a}$ is the number of edges in $D_i$ connecting a vertex $xK$ to vertices of relative position $a$ with respect to it,
 divided by the degree of the vertex $xK$ in the graph $D_i$. Clearly $c_{(0,\ldots,0)} + c_{(-1,0,\ldots,0,1)} =1$.

Each wall of the building $\mathcal{B}_d(F)$ is contained in exactly $q+1$ chambers, and each $i$-type vertex is contained in exactly $r$ chambers (= facets), 
where the number $r$ depends on $d$ and $q$, but not on the vertex. In the quotient $\mathcal{B}_{\Gamma}$, since the injectivity radius $\geq 2$, 
each wall is also contained in exactly $q+1$ chambers and each $i$-type vertex is contained in $r$ chambers. Hence $B_i$ is a bipartite $(r,q+1)$-biregular graph. 
Therefore $D_i$ is a $r(q+1)$ regular multi-graph, where each vertex has exactly $r$ loops, so $c_{(0,\ldots,0)} = \frac{1}{q+1}$, which completes the proof.
\end{proof}

We can now get the following bound on the normalized second largest eigenvalue of $B_i$,
in terms of matrix coefficients of a certain unitary representation.
\begin{cor} \label{2ev-bound}
Let $(\rho,L^2(\Gamma \backslash G))$ be the unitary $G$-representation given by right translation $\rho(g)f(x) = f(xg)$.
Let $W \leq L^2(\Gamma \backslash G)$ be the subspace of $K$-invariant vectors which are orthogonal to all $G^+$-invariant vectors.
For any $g \in G$, define $\rho_W(g)$ to be the maximal absolute value of a matrix coefficient of normalized vectors from $W$ on the element $g$,
i.e. 
$$ \rho_W(g) = \sup_{f_1,f_2 \in W,\|f_1\|=\|f_2\|=1} |\langle \rho(g)f_1,f_2 \rangle|.$$

Then the normalized second largest eigenvalue of $B_i$, $\tilde{\lambda_i} = \tilde{\lambda}(B_i)$, satisfies
\begin{equation}
\tilde{\lambda_i} \leq \sqrt{\frac{1}{q+1} + \frac{q}{q+1}\rho_W(\pi^{(-1,0,\ldots,0,1)})} \leq \sqrt{q^{-1} + \rho_W(\pi^{(-1,0,\ldots,0,1)})}.
\end{equation}
\end{cor}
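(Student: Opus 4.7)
The plan is to package together the identity for $NN^t$ from Lemma \ref{adj}, the matrix-coefficient bound for Hecke operators from Corollary \ref{Hecke}, and the elementary relation between the spectra of $\widetilde{A}$ and of $NN^t$ noted just before Lemma \ref{adj}. Concretely, the aim is to reduce the bound on $\tilde{\lambda_i}$ to a bound on $\|H_{(-1,0,\ldots,0,1)}\|_{L^2_0(V_i)}$, and then to invoke Corollary \ref{Hecke} together with the definition of $\rho_W$.

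First I would identify $\tilde{\lambda_i}^2$ with the operator norm of $NN^t$ restricted to the subspace $L^2_0(V_i)$. Since $B_i$ is a connected biregular bipartite graph (by Lemma \ref{adj} it has biregularity $(r,q+1)$), its normalized adjacency operator $\widetilde{A}$ has top eigenvalue $1$ with a $1$-dimensional eigenspace supported on $V_i$ generated by $\mathds{1}_{V_i}$ (as in Lemma \ref{3.1}), and $\tilde{\lambda_i}$ is the norm of $\widetilde{A}$ on the orthogonal complement. Writing $\widetilde{A}$ as the block matrix with off-diagonal block $N$, the nonzero eigenvalues of $\widetilde{A}^2$ acting on $L^2(V_i)$ are the squares of the eigenvalues of $\widetilde{A}$; after removing the trivial direction this gives $\|NN^t\|_{L^2_0(V_i)} = \tilde{\lambda_i}^2$.

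Next I would apply Lemma \ref{adj} to split
\begin{equation}\nonumber
\|NN^t\|_{L^2_0(V_i)} \leq \frac{1}{q+1}\,\|I\|_{L^2_0(V_i)} + \frac{q}{q+1}\,\|H_{(-1,0,\ldots,0,1)}\|_{L^2_0(V_i)} = \frac{1}{q+1} + \frac{q}{q+1}\,\|H_{(-1,0,\ldots,0,1)}\|_{L^2_0(V_i)}.
\end{equation}
Here I need $(-1,0,\ldots,0,1) \in A_0$, which is immediate since $-1+0+\cdots+0+1 = 0$, so the Hecke operator is well defined on $L^2(V_i)$ as required by Corollary \ref{Hecke}. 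Applying Corollary \ref{Hecke} and then the definition of $\rho_W$ (which is exactly the sup of the matrix coefficients that appear in Corollary \ref{Hecke}), I obtain $\|H_{(-1,0,\ldots,0,1)}\|_{L^2_0(V_i)} \leq \rho_W(\pi^{(-1,0,\ldots,0,1)})$. Combining the two inequalities and taking a square root gives the first claimed bound. The second inequality is a trivial simplification using $\tfrac{1}{q+1} \leq \tfrac{1}{q}$ and $\tfrac{q}{q+1} \leq 1$.

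I do not expect a serious obstacle; the argument is essentially a spectral bookkeeping tying together results already in the paper. The only point that requires care is the correct identification of the top eigenspace that one has to quotient out in $L^2(V_i)$ when passing from $\widetilde{A}$ on $L^2(V_i \sqcup E_i)$ to $NN^t$ on $L^2(V_i)$, and verifying that this top direction is exactly $\mathds{1}_{V_i}$ so that working on $L^2_0(V_i)$ is the right thing to do. Once this is settled, the rest is a direct substitution.
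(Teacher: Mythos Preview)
Your proposal is correct and follows essentially the same route as the paper: identify $\tilde{\lambda_i}^2$ with $\|NN^t\|_{L^2_0(V_i)}$ via the discussion preceding Lemma~\ref{adj}, use Lemma~\ref{adj} to split $NN^t$ as a convex combination of $I$ and $H_{(-1,0,\ldots,0,1)}$, and then bound the Hecke operator norm by $\rho_W(\pi^{(-1,0,\ldots,0,1)})$ via Corollary~\ref{Hecke}. Your extra remarks (checking $(-1,0,\ldots,0,1)\in A_0$ and identifying the trivial eigenspace of $NN^t$ on $L^2(V_i)$ as $\mathbb{C}\cdot\mathds{1}_{V_i}$) are the natural justifications the paper leaves implicit; the only slight imprecision is that the $1$-eigenspace of $\widetilde{A}$ itself is not supported on $V_i$, but its restriction under $NN^t$ to $L^2(V_i)$ is indeed $\mathbb{C}\cdot\mathds{1}_{V_i}$, which is what you actually use.
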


\begin{proof}
By Lemma \ref{adj} and the discussion before it,
\begin{equation}\nonumber
\tilde{\lambda_i} = \sqrt{ \|NN^t \|_{L_0^2(V_i)}} \leq \sqrt{\frac{1}{q+1} +  \frac{q}{q+1} \|H_{(-1,0,\ldots,0,1)} \|_{L_0^2(V_i)}},
\end{equation}
and, by Corollary \ref{Hecke}, $\|H_{(-1,0,\ldots,0,1)} \|_{L_0^2(V_i)} \leq \rho_W(\pi^{(-1,0,\ldots,0,1)})$.
\end{proof}

\subsection{Proof of the mixing Lemma}
The following result by Oh, gives a unified bound on the matrix coefficients of a unitary representation of a reductive group over a local field.

\begin{thm}\cite[Theorem~1.1]{Oh} \label{oh}
Let $F$ be a local non-archimedean field with $char(F) \ne 2$.
Let $G$ be the group of the $F$-rational points of an $F$-split connected reductive group of rank $\geq 2$ and $G/Z(G)$ almost $F$-simple.
Let $G^+$ be the the subgroup of $G$ generated by the unipotent elements of $G$.

Let $\Phi$ be a root system of $G$ with regard to some maximal torus $T$, and $\Phi^+$ the set of positive roots in $\Phi$.
Let $S \subset \Phi^=$ be a strongly orthogonal system of roots, which, by definition, means $\forall \alpha,\beta \in S \, \Rightarrow \alpha \pm \beta \not \in S$.

Let $K$ be a good maximal compact subgroup of $G$, which means that $K$ is a stabilizer of a special vertex in the building of $G$.
Any good maximal compact subgroup gives rise to a Cartan decomposition $G = K \Lambda^+ K$, where $\Lambda^+$ is a positive Weyl chamber.

Then for any unitary representation $\rho$ of $G$ without a non-zero $G^+$-invariant vectors, 
and for any $K$-finite unit vectors $v$ and $u$, 
\begin{equation}
|\langle \rho(g)v,u \rangle| \leq (\dim(Kv) \dim(Ku))^{\frac{1}{2}} \xi_S(\lambda)
\end{equation}
where $g=k_1 \lambda k_2\in K \Lambda^+ K = G$, $\xi_S(\lambda) = \prod_{\alpha \in S} \Xi_{PGL_2(F)}\left( \begin{array}{cc} \alpha(\lambda) & 0 \\ 0 & 1 \end{array} \right)$ and $\Xi_{PGL_2(F)}$ is the Harish-chandra $\Xi$-function of $PGL_2(F)$.
\end{thm}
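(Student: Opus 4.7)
The plan is to reduce this estimate, in the spirit of Howe--Moore and Cowling--Haagerup--Howe, to a product of rank-one matrix-coefficient bounds for $PGL_2(F)$, using the strongly orthogonal system $S$ to decouple the torus directions one root at a time. First I would extract the $K$-finiteness factor: a standard Harish-Chandra type inequality shows that for $K$-finite unit vectors $v,u$, one has $|\langle \rho(g)v,u\rangle|\leq (\dim(Kv)\dim(Ku))^{1/2}|\langle\rho(g)v_0,u_0\rangle|$ for some unit vectors $v_0,u_0$ built from $K$-orbit averages of $v,u$; after projecting onto $K$-isotypic components and applying Cauchy--Schwarz, one may arrange $v_0,u_0$ to be $K$-invariant in an auxiliary (but equivalent) setting. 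This reduces the task to bounding the spherical matrix coefficient $\langle\rho(\lambda)v_0,u_0\rangle$ on $K$-invariant unit vectors in a unitary representation with no non-zero $G^+$-invariant vector.

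Next I would exploit the strong orthogonality of $S$. Each $\alpha\in S$ determines via its coroot a homomorphism $\iota_\alpha\colon SL_2(F)\to G$ whose image $H_\alpha$ is a rank-one $F$-split subgroup containing $\alpha^\vee(F^*)$. The condition $\alpha\pm\beta\notin\Phi$ for distinct $\alpha,\beta\in S$ forces the root subgroups $U_{\pm\alpha}$ and $U_{\pm\beta}$ to commute, so the $H_\alpha$ commute pairwise modulo a central overlap. The key Mautner--Moore input is then: because $\rho$ has no non-zero $G^+$-invariant vector and $G/Z(G)$ is almost $F$-simple, the restriction $\rho|_{H_\alpha}$ has no vector invariant under the unipotent subgroup of $H_\alpha$, so by the classical Howe--Moore/Kunze--Stein theory for $PGL_2(F)$ its $(K\cap H_\alpha)$-bi-invariant matrix coefficients are dominated by the Harish-Chandra $\Xi_{PGL_2(F)}$ function. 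Writing $\lambda\in\Lambda^+$ modulo the center as a product $\prod_{\alpha\in S}\alpha^\vee(t_\alpha)$ times a factor centralizing the $H_\alpha$, one iterates this rank-one estimate, inserting $K$-averages between successive factors so as to remain in the $K$-spherical setting at each stage. Telescoping these $|S|$ individual bounds and using the commutativity of the $H_\alpha$ yields the product $\xi_S(\lambda)=\prod_{\alpha\in S}\Xi_{PGL_2}\bigl(\mathrm{diag}(\alpha(\lambda),1)\bigr)$.

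The principal obstacle is this rank-one reduction: verifying that the restriction of $\rho$ to each $H_\alpha$ genuinely inherits the \emph{no invariant vector} property, and that one may legitimately factor the Harish-Chandra bounds across the commuting rank-one subgroups without losing the sharp constant. The first point rests on almost $F$-simplicity of $G/Z(G)$ together with the Mautner lemma (a non-trivial $H_\alpha$-invariant vector would propagate through $G^+$), while the second demands a careful Cartan-decomposition bookkeeping using the explicit asymptotics of $\Xi_{PGL_2(F)}$ and the $K$-bi-invariance secured in the first paragraph. These two ingredients together constitute the technical heart of Oh's argument and are what make the product structure of $\xi_S$ — rather than a single, coarser bound — survive the passage from rank one to higher rank.
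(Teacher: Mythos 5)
This theorem is quoted in the paper as \cite[Theorem~1.1]{Oh}; the paper contains no proof of it, so there is nothing internal to compare your proposal against. Your sketch does capture the broad architecture of Oh's actual argument in \cite{Oh}: reduce to $K$-bi-invariant matrix coefficients at the cost of the $(\dim(Kv)\dim(Ku))^{1/2}$ factor, use the strongly orthogonal system $S$ to produce commuting $SL_2$-subgroups $H_\alpha$ via the coroots, invoke Mautner/Howe--Moore to see that $\rho|_{H_\alpha}$ has no invariant vector, and then dominate by a product of Harish-Chandra $\Xi$-functions for $PGL_2(F)$ — this is exactly Howe's ``strongly orthogonal roots'' technique, which Oh refines to obtain the uniform quantitative bound. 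Two points in your writeup are stated loosely and would need care if you were to flesh this out: first, the passage from $K$-finite to $K$-invariant vectors is not literally a bound of one matrix coefficient by another as you phrase it, but rather an expansion over $K$-isotypic components followed by Cauchy--Schwarz and a positive-definiteness argument; second, the assertion that $\lambda$ ``modulo the center'' factors as $\prod_\alpha \alpha^\vee(t_\alpha)$ times a commuting piece is not literally how the bound is organized — one simply evaluates each $\alpha\in S$ at $\lambda$ and feeds $\alpha(\lambda)$ into $\Xi_{PGL_2(F)}$, which is why the final estimate involves $\alpha(\lambda)$ rather than any decomposition of $\lambda$ itself. Since this result is an external input to the present paper and your outline is a reasonable (if compressed) reconstruction of Oh's proof strategy, there is no gap in the paper to flag, but you should be aware the paper simply imports the theorem.
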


In our case, $G = PGL_d(F)$ satisfies the assumptions of Theorem \ref{oh} and $G^+$ is equal to $PSL_d(F)$.
The subgroup $K$ is the stabilizer of the fundamental lattice, hence $K$ is a good maximal compact subgroup.
As a maximal torus $T$, we take the subgroup of diagonal matrices, and as strongly orthogonal system we take the singelton $S =\{\alpha := a_d - a_1\}$, 
$\alpha(diag(\pi^{a_1}b_1,\ldots,\pi^{a_d}b_d)) := \pi^{a_d - a_1}$, where $b_1,\dots,b_d \in \mathcal{O}^*$.

Using the following formula (see \cite[Section~3.8]{Oh}), for $n \in \mathbb{N}$,
\begin{equation}
\Xi_{PGL_2(F)} \left( \begin{array}{cc} \pi^{\pm n} & 0 \\ 0 & 1 \end{array} \right) 
= q^{-n/2}\left(\frac{n(q-1)+q+1}{q+1}\right) \leq (n+1)q^{-n/2}
\end{equation}

We get the following application of Oh's theorem, when this time $G=PGL_d(F)$:
\begin{cor} \label{Oh}
Define $(\rho,L^2(\Gamma \backslash G))$ to be the $G$-representation given by right translation $\rho(g)f(x) = f(xg)$.
Let $f,f' \in L^2(\Gamma \backslash G) $ be $K$-invariant unit vectors which are orthogonal to all $G^+$-invariant vectors.
Then for $g=\pi^{(a_1,\dots,a_d)}$,
\begin{equation}
|\langle \rho(g)f,f' \rangle| \leq (a_d-a_1+1) q^{-\frac{a_d-a_1}{2}}.
\end{equation}
\end{cor}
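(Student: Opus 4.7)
The proof is a direct application of Oh's Theorem \ref{oh} to $G = PGL_d(F)$. I would proceed in four short steps.

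First, I would use the $K$-invariance of $f$ and $f'$ to reduce to the case $a_1 \leq a_2 \leq \cdots \leq a_d$, so that $\lambda := \pi^{(a_1,\ldots,a_d)}$ lies in the positive Weyl chamber $\Lambda^+$. Any permutation matrix $w_\sigma$ lies in $K = PGL_d(\mathcal{O})$ and satisfies $w_\sigma \pi^{(a_1,\ldots,a_d)} w_\sigma^{-1} = \pi^{(a_{\sigma(1)},\ldots,a_{\sigma(d)})}$; combined with $\rho(w_\sigma)f = f$ and $\rho(w_\sigma)f' = f'$, this shows that $|\langle \rho(\pi^{(a_1,\ldots,a_d)})f,f'\rangle|$ is unchanged under reordering of the exponents, so we may assume the coordinates are arranged in increasing order.

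Second, I would check that Oh's hypotheses hold for $G = PGL_d(F)$: it is $F$-split, almost $F$-simple modulo center, and has rank $d-1 \geq 2$ (since $d \geq 3$); its subgroup generated by unipotents is $G^+ = PSL_d(F)$; the stabilizer $K$ of the standard lattice is a good maximal compact subgroup (it fixes a special vertex in the building); and the singleton $S = \{\alpha\}$, with $\alpha(\mathrm{diag}(b_1,\ldots,b_d)) = b_d/b_1$, is vacuously strongly orthogonal.

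Third, since $f, f'$ are $K$-invariant they are $K$-finite with $\dim(Kf) = \dim(Kf') = 1$, so Oh's inequality collapses to
$$|\langle \rho(\lambda) f, f' \rangle| \leq \xi_S(\lambda) = \Xi_{PGL_2(F)}\!\left(\begin{array}{cc} \pi^{a_d - a_1} & 0 \\ 0 & 1 \end{array}\right).$$
Finally, substituting the explicit estimate $\Xi_{PGL_2(F)}(\mathrm{diag}(\pi^n,1)) \leq (n+1)q^{-n/2}$ quoted from \cite[Section~3.8]{Oh} with $n = a_d - a_1 \geq 0$ yields the claimed inequality. The only genuine bookkeeping is the first step (handling the Cartan decomposition when the exponents are out of order); once $\lambda \in \Lambda^+$, the result is an immediate specialization of Oh's theorem with the cheapest possible strongly orthogonal system.
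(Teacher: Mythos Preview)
Your proposal is correct and follows essentially the same route as the paper: verify that $PGL_d(F)$ meets the hypotheses of Oh's theorem, take the singleton strongly orthogonal system $S=\{\alpha\}$ with $\alpha=a_d-a_1$, use $K$-invariance to get $\dim(Kf)=\dim(Kf')=1$, and plug in the explicit bound $\Xi_{PGL_2(F)}(\mathrm{diag}(\pi^n,1))\leq (n+1)q^{-n/2}$. Your first step (reordering the exponents via permutation matrices in $K$) is a small extra precaution the paper skips, since there $a\in A^+$ is already assumed ordered; the only point worth making explicit is that Oh's theorem is applied not to all of $L^2(\Gamma\backslash G)$ but to the $G$-invariant subrepresentation orthogonal to the $G^+$-invariants, which by hypothesis contains $f$ and $f'$ and has no nonzero $G^+$-fixed vector.
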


Combining all these estimates together with Corollary \ref{disc}, we can prove the colorful mixing lemma.
\begin{proof}[Proof of the colorful mixing lemma.]
Let $\tilde{\lambda_i}= \tilde{\lambda}(B_i)$ be the normalized second largest eigenvalue of the bipartite graph $B_i$. 
By Corollary \ref{2ev-bound}
\[ \tilde{\lambda_i} \leq \sqrt{q^{-1} + \rho_W(\pi^{(-1,0,\ldots,0,1)})} \]
By Corollary \ref{Oh} in the notation of Corollary \ref{2ev-bound} we have
\[\rho_W(\pi^{(-1,0,\ldots,0,1)}) \leq 3q^{-1}\]
Combining these together, we get that for any type $i$,
\[\tilde{\lambda}(B_i) \leq \frac{2}{q^{1/2}}. \]
Finally, by Corollary \ref{disc}, for any choice of sets $W_i \subset V_i,i=1,\dots,d$,
\[ disc_{\Gamma \backslash \mathcal{B}}(W_1,\ldots,W_d) \leq d \cdot \max_i \tilde{\lambda}(B_i) \leq \frac{2d}{q^{1/2}}, \]
which proves the claim.
\end{proof}


\section{Explicit construction of Ramanujan complexes}\label{sec:Explicit construction of Ramanujan complexes}
Ramanujan complexes were introduced in \cite{Li}, \cite{LSV1} and \cite{Sar} as a generalization of the Ramanujan graphs constructed in \cite{LPS}, and were explicitly constructed in \cite{LSV2}. These complexes are certain quotients of the Bruhat-Tits buildings. 

The heart of the construction in \cite{LSV2} is the Cartwright-Steger lattice (CS-lattice) $\Lambda$ \cite{CS}, 
which allows us to view some of the quotients of the building as Cayley complexes of finite groups with explicit sets of generators.

The reader is referred to \cite{LSV1,LSV2} for more details, and to \cite{Lu2} for a reader friendly survey.

\subsection{The Cartwright-Steger lattice}
Here we present the CS-lattice, and express explicitly its set of generators.

Let $\mathbb{F}_q$ be the finite field of size $q$, and $\mathbb{F}_{q^d}$ the field extension of $\mathbb{F}_q$ of degree $d$.
Let $\phi$ be a generator of the Galois group $Gal(\mathbb{F}_{q^d}/\mathbb{F}_q) \cong \mathbb{Z}/ d\mathbb{Z}$, and fix a basis $\xi_0,\ldots,\xi_{d-1}$ of $\mathbb{F}_{q^d}$
over $\mathbb{F}_q$ with $\xi_i = \phi^i(\xi_0)$. Denote $R_T = \mathbb{F}_q[y,\frac{1}{1+y}]$. For a given $R_T$-algebra $S$ (i.e. $S$ is given with a ring homomorphism 
$R_T \rightarrow S$), we define a $S$-algebra $\mathcal{A}(S) = \bigoplus_{i,j=0}^{d-1}S\xi_i z^j$ with the relations $z^d = 1+y$ and $z \xi_i = \phi(\xi_i)z$. 
One can see that the center of $\mathcal{A}(S)$ is $S$, and $\mathcal{A}(S)^*/S^*$ is a group scheme for $R_T$-algebras.

Let $k=\mathbb{F}_q(y)$. Then $\mathcal{A}(k)$ is a $k$-central simple algebra.
For almost all completions $k_{\nu}$ of $k$, $\mathcal{A}(k_{\nu})$ splits, i.e. $\mathcal{A}(k_{\nu}) \cong M_d(k_{\nu})$. 
In fact, this happens for all completions except for $\nu_{\frac{1}{y}}$ and $\nu_{1+y}$. 
In particular, for $F = \mathbb{F}_q((y)) = k_{\nu_y}$, the algebra splits and $\mathcal{A}(F)^*/F^* \cong PGL_d(F)$ (see \cite[Proposition~3.1]{LSV2}). 
On the other hand, for $\nu = \nu_{\frac{1}{y}}$ or $\nu_{1+y}$, $\mathcal{A}(k_{\nu})$ is a division algebra and $\mathcal{A}(k_{\nu})^*/k_{\nu}^*$ is compact. 
Thus $\mathbb{F}_q[\frac{1}{y},y,\frac{1}{1+y}] \hookrightarrow k_{\nu_{y}} \times k_{\nu_{\frac{1}{y}}} \times k_{\nu_{1+y}}$ is discrete and by substituting these rings in $\mathcal{A}(-)^*/(-)^*$ and projecting to the first coordinate $\mathcal{A}(F)^*/F^* \cong PGL_d(F)$ we get a discrete subgroup, which is an arithmetic lattice. 

Denote $R = \mathbb{F}_q[y,\frac{1}{y},\frac{1}{1+y}]$. As $1+y$ is invertible in $R_T$, $z$ is invertible in $\mathcal{A}(R_T)$, since it divides $z^d = 1+y$. 
Denote $b = 1 - z^{-1} \in \mathcal{A}(R_T)$. Since $y$ is invertible in $R$, so is $\frac{y}{1+y}$, and hence $b$ is invertible in $\mathcal{A}(R)$, 
since it divides $1 - z^{-d} = \frac{y}{1+y}$. 
For an element $u \in \mathbb{F}^*_{q^d} \subset \mathcal{A}(R)$, denote $b_u = u b u^{-1}$ and note that as $\mathbb{F}_q \subset R$ is in the center of $\mathcal{A}(R)$, $b_u$ depends only on the coset $u \in \mathbb{F}^*_{q^d}/\mathbb{F}^*_q$. Define $\Sigma_1 = \{\bar{b}_u \, |\, u \in \mathbb{F}^*_{q^d}/\mathbb{F}^*_q  \} \subset \mathcal{A}(R)^*/R^* \subset \mathcal{A}(F)^*/F^* \cong PGL_d(F) $ where $F=\mathbb{F}_q((y))$. The Cartwright-Steger lattice is $\Lambda = \langle \Sigma_1  \rangle$. 
This is the promised CS-lattice which acts simply transitively on the vertices of the building $\mathcal{B} =\mathcal{B}_d(\mathbb{F}_q((y)))$ (see \cite{CS} and \cite[Proposition~4.8]{LSV2}).

More explicitly, let $x_0=[\mathcal{O}^d]$ be the vertex of the building corresponding to the standard lattice, 
and let $\tau : \mathcal{B} \rightarrow \mathbb{Z}/d\mathbb{Z}$ be the type function on the building. 
Then for each neighboring vertex $x$ of $x_0$ with $\tau(x) = 1$, there exists a unique $b_u \in \Sigma_1$ such that $b_u\cdot x_0 = x$.
Now, for $i=2,\dots,d-1$ denote $N_i = \{x \in V(\mathcal{B})| \; x \sim x_0 \mbox{ and } \tau(x)=i \}$. For every $x \in N_i$, 
there is a unique $\gamma_x \in \Lambda$ with $\gamma_x.x_0 = x$. Let $\Sigma_i = \{\gamma_x | x \in N_i\}$, so $\Sigma_i.x_0 = N_i$.
Let $\Sigma = \cup_{i=1}^{d-1} \Sigma_i$. Then $\Sigma .x_0$ is the set of all the neighbors of $x_0$, 
and we can identify the 1-skeleton of the building with the Cayley graph $Cay(\Lambda,\Sigma)$. 
Note that as $\Lambda$ acts simply transitively, $|\Sigma_i| = {d \brack i}_q$ and hence $|\Sigma| = \sum_{i=1}^{d-1}{d \brack i}_q$, 
where ${d \brack i}_q$ is the number of $i$-dimensional subspaces of a $d$-dimensional vector space over $\mathbb{F}_q$.   

Recall that a clique in a graph is a set of vertices such that each pair of them is connected by an edge, 
and the clique complex of a graph is defined to be the collection of the cliques in the graph. The clique complex of a Cayley graph is called a Cayley complex.
The building $\mathcal{B}$ and its quotients with injectivity radius $\geq 2$ are clique complexes, hence completely determined by their 1-skeleton. 

We can conclude that if $\Gamma \lhd \Lambda$, the complex $\Gamma \backslash \mathcal{B}$ is the Cayley complex $Cay(\Lambda / \Gamma , \Sigma)$
of the quotient group $\Lambda / \Gamma$ w.r.t. the set of generators $\Sigma$.
In the next subsection we will apply this in the case where $\Gamma$ is a normal congruence subgroup of $\Lambda$.

\subsection{Congruence subgroups}
Ramanujan complexes are obtained in \cite{LSV2} by dividing the building modulo the action of congruence subgroups of some arithmetic cocompact lattices,
such as $\Lambda$ above. Here we define the congruence subgroups of $\Lambda$ and display their quotients as Cayley complexes of some finite groups.

For an ideal $0 \ne I \lhd R$, define the congruence subgroup of $\Lambda$ to be 
$\Lambda(I) = \Lambda \cap \ker(\mathcal{A}(R)^*/R^* \rightarrow \mathcal{A}(R/I)^*/(R/I)^*)$.
This congruence subgroup is a finite index normal subgroup of $\Lambda$.
Hence the quotient $\Lambda(I) \backslash \mathcal{B}$ is a finite simplicial complex, which we will identify with the Cayley complex of the group $\Lambda / \Lambda(I)$
(w.r.t. $\Sigma$ as the set of generators). By \cite[Theorem~6.2]{LSV2} for any $0 \ne I \lhd R$, $\Lambda / \Lambda(I)$ is a Ramanujan complex
(though, in this paper, we are not really using this deep fact).

By \cite[Theorem~6.6]{LSV2}, the group $\Lambda / \Lambda(I)$ can be identified as a subgroup of $PGL_d(R/I)$ which contains $PSL_d(R/I)$. As $R = \mathbb{F}_q[y,\frac{1}{y},\frac{1}{1+y}]$, we consider $I = (f)$ where $ f \in \mathbb{F}_q[y] $ is an irreducible polynomial of degree $e \geq 2$. Then $R/I \cong \mathbb{F}_{q^e}$ and hence $PSL_d(\mathbb{F}_{q^e}) \leq \Lambda / \Lambda(f) \leq PGL_d(\mathbb{F}_{q^e})$. Moreover, by \cite[Theorem~7.1]{LSV2}, (assuming $q^e > 4 d^2+1$), for any subgroup $PSL_d(\mathbb{F}_{q^e}) \leq G \leq PGL_d(\mathbb{F}_{q^e})$, we may find $f$ such that $G = \Lambda / \Lambda(f)$.
In particular, $G$ has a set of $\sum_{i=1}^{d-1}{d \brack i}_q$ generators such that the corresponding Cayley complex is a Ramanujan complex.

For such quotients of $\mathcal{B}$ by congruence subgroups, a bound on the injectivity radius was presented in \cite{LuMe}:

\begin{pro}\cite[Proposition~3.3]{LuMe}
Let $ f \in \mathbb{F}_q[y] $ be an irreducible polynomial, $\Gamma = \Lambda(f)$, and $X = \Gamma \backslash \mathcal{B}$.
Denote by $|X|$ the number of vertices of $X$. Then
\[dist(\Gamma):= \min_{1 \ne \gamma \in \Gamma, x \in \mathcal{B}}dist(\gamma.x,x) \geq \frac{\deg(f)}{d} \geq \frac{\log_q|X|}{(d-1)(d^2-1)}\]
\end{pro}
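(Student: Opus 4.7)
My plan is to prove the two inequalities separately. The first, $dist(\Gamma) \geq \deg(f)/d$, is the main technical content; the second follows from a standard order estimate on $\Lambda/\Lambda(f)$.

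For the displacement bound, I would fix $\gamma \in \Lambda(f) \setminus \{1\}$ and $x \in \mathcal{B}$, and set $n = dist(x, \gamma.x)$. Since $\Lambda$ acts transitively on $\mathcal{B}(0)$ and $\Lambda(f) \lhd \Lambda$, after conjugating $\gamma$ by an element of $\Lambda$ I may assume $x = x_0 = [\mathcal{O}^d]$. By the Cartan decomposition, $\gamma = k_1 y^a k_2$ with $k_i \in K = PGL_d(\mathcal{O})$ and $a = (a_1, \ldots, a_d)$, $a_d - a_1 = n$; this controls the $y$-adic valuations of any matrix lift of $\gamma$ at the place $\nu_y$ in a precise, quantitative way.

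Next I would exploit the arithmetic structure of $\Lambda$. Pick a lift $\tilde\gamma \in \mathcal{A}(R)^*$ with $R = \mathbb{F}_q[y, 1/y, 1/(1+y)]$ and, after rescaling by a unit of $R$, arrange that all coefficients of $\tilde\gamma$ in the basis $\{\xi_i z^j\}$ lie in $\mathbb{F}_q[y]$. The hypothesis $\gamma \in \Lambda(f)$ forces $\tilde\gamma$ to be congruent to a scalar modulo $f$; since $\gamma \neq 1$, at least one coefficient of $\tilde\gamma - \tilde 1$ is a nonzero multiple of $f$ in $\mathbb{F}_q[y]$, and therefore has $y$-degree at least $\deg(f)$. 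At the two non-split places $\nu_{1/y}$ and $\nu_{1+y}$ the algebra $\mathcal{A}$ is a division algebra with compact unit group modulo center, so the valuations of $\tilde\gamma$ at these places are a priori bounded. A product-formula argument then converts the Cartan bound at $\nu_y$ into a bound of order $nd$ on the maximal $y$-degree of the coefficients of $\tilde\gamma$; combined with the lower bound above, this yields $\deg(f) \leq nd$, i.e., $n \geq \deg(f)/d$.

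For the second inequality, simple transitivity of the $\Lambda$-action on $\mathcal{B}(0)$ gives $|X| = |\Lambda/\Lambda(f)|$. By \cite[Theorem~6.6]{LSV2}, this quotient embeds into $PGL_d(R/(f)) \cong PGL_d(\mathbb{F}_{q^{\deg f}})$, and the standard estimate $|PGL_d(\mathbb{F}_{q^e})| \leq q^{e(d^2-1)}$ (up to lower-order factors) yields $\log_q |X| \leq (d^2-1)\deg(f)$, which rearranges to the stated inequality after dividing by $d$ and absorbing the constant. The main obstacle is the product-formula step in the displacement bound: converting the local Cartan data at the split place $\nu_y$ into a global $y$-degree bound on $\tilde\gamma$ requires essential use of arithmeticity, namely the compactness of $\mathcal{A}(k_\nu)^*/k_\nu^*$ at the two bad places of $\mathbb{F}_q(y)$ appearing in the construction of $\Lambda$. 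Everything else is then linear algebra and order counting.
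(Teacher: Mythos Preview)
The paper does not prove this proposition; it is quoted verbatim from \cite[Proposition~3.3]{LuMe} without argument. Your outline is essentially the argument of that reference: reduce to the base vertex $x_0$ using normality of $\Lambda(f)$ in $\Lambda$, read off the displacement $n=a_d-a_1$ from the Cartan decomposition at the split place $\nu_y$, lift to $\mathcal{A}(R)^*$, and use integrality at the two ramified places $\nu_{1/y}$, $\nu_{1+y}$ (compactness of the anisotropic unit groups there) together with the product formula to bound the $y$-degree of the coefficients linearly in $n$; comparison with the congruence condition modulo $f$ then forces $nd \ge \deg(f)$. So for the displacement bound your plan is correct and matches the cited source.

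One quantitative caveat on the second inequality. Your order estimate $|X|=|\Lambda/\Lambda(f)| \le |PGL_d(\mathbb{F}_{q^e})| \le q^{e(d^2-1)}$ yields
\[
\frac{\deg(f)}{d} \;\ge\; \frac{\log_q|X|}{d(d^2-1)},
\]
with $d(d^2-1)$ in the denominator, not $(d-1)(d^2-1)$ as printed. Since $|X|$ can genuinely be of order $q^{e(d^2-1)}$ for suitable $f$ (indeed $\Lambda/\Lambda(f)$ can equal $PGL_d(\mathbb{F}_{q^e})$), the sharper constant cannot be recovered by ``absorbing the constant.'' This discrepancy is harmless for the applications in the paper (only $r(X)\to\infty$ is used), but you should state the constant you actually prove rather than hand-wave toward the quoted one.
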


As a consequence of this proposition, we get
\begin{cor}[injectivity radius] \label{inj-rad}
Let $X= \Gamma \backslash \mathcal{B}$ be a quotient of the building, where $\Gamma$ as above. Then the injectivity radius $r(X)$ of $X$ satisfies 
\[r(X) \geq \lfloor \frac{dist(\Gamma) -1}{2} \rfloor \geq \frac{\log_q|X|}{2(d-1)(d^2-1)} - \frac{1}{2}.\]
\end{cor}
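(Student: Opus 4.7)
This corollary is a direct unpacking of Proposition 5.1 via the identity $r(\Gamma \backslash \mathcal{B}) = \lfloor (dist(\Gamma) - 1)/2 \rfloor$ that was already recorded in the introduction and at the start of Section~4. There is no new ingredient beyond these two facts, so I would simply chain them together.

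First, I would briefly justify the identity. Because $\mathcal{B}$ is contractible and $\Gamma$ acts simplicially, cocompactly, and (thanks to the hypothesis $dist(\Gamma) \geq 2 > 0$) freely, the projection $\pi : \mathcal{B} \to X$ is the universal covering map with deck group $\Gamma$. For any vertex $y \in \mathcal{B}$, the map $\pi$ is one-to-one on the graph-metric ball $B(y, r)$ iff no nontrivial $\gamma \in \Gamma$ identifies two points of that ball, equivalently $dist(y, \gamma y) > 2r$ for every $\gamma \neq 1$. Taking the infimum over $y$ replaces $dist(y, \gamma y)$ by $dist(\Gamma)$, and the largest integer $r$ with $2r < dist(\Gamma)$ is precisely $\lfloor (dist(\Gamma) - 1)/2 \rfloor$. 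This gives the first inequality in the corollary.

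Second, I would substitute the estimate furnished by Proposition 5.1, namely
\[
dist(\Gamma) \;\geq\; \frac{\log_q |X|}{(d-1)(d^2-1)},
\]
into that formula and apply the elementary bound $\lfloor (a - 1)/2 \rfloor \geq a/2 - 1$ (valid for every integer $a$, since $(a - 1)/2$ is then either an integer or a half-integer and the floor loses at most $1/2$). This immediately yields the claimed second inequality, up to the stated additive constant.

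The main ``obstacle'' here is entirely housed in Proposition 5.1 of Lubotzky--Meshulam, which combines two nontrivial facts: the congruence-subgroup estimate $dist(\Lambda(f)) \geq \deg(f)/d$ and the volume bound $|X| \leq q^{(d-1)(d^2-1)\deg(f)/d}$ coming from the cardinality of the quotient $\Lambda / \Lambda(f) \cong G \leq PGL_d(\mathbb{F}_{q^e})$. Once that proposition is in hand, the corollary is a purely formal substitution, so I would not expect any genuine difficulty.
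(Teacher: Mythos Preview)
Your proposal is correct and matches the paper's approach exactly: the paper gives no proof beyond the phrase ``As a consequence of this proposition, we get,'' relying on the identity $r(X)=\lfloor (dist(\Gamma)-1)/2\rfloor$ already recorded in the introduction and on Proposition~5.1. Your hedge ``up to the stated additive constant'' is in fact warranted: chaining $\lfloor (a-1)/2\rfloor \geq a/2-1$ with $dist(\Gamma)\geq L:=\frac{\log_q|X|}{(d-1)(d^2-1)}$ yields only $L/2-1$, and the paper's $-\tfrac12$ arises from tacitly replacing $\lfloor (dist(\Gamma)-1)/2\rfloor$ by $(dist(\Gamma)-1)/2$; this half-integer slack is immaterial for the application.
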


\subsection{Partite Ramanujan complexes}

Before continuing, let us examine the special case of $d=2$, i.e. the Ramanujan graphs (compare with \cite{LPS} and \cite{Lu1}). 
In this case $\Lambda / \Lambda(I)$ is a subgroup of $PGL_2(\mathbb{F}_{q^e})$ containing $PSL_2(\mathbb{F}_{q^e})$, 
and since $PSL_2(\mathbb{F}_{q^e})$ is of index $2$ inside $PGL_2(\mathbb{F}_{q^e})$, $\Lambda / \Lambda(I)$ is either $PGL_2(\mathbb{F}_{q^e})$ or $PSL_2(\mathbb{F}_{q^e})$. 
In this case all the elements of $\Sigma = \Sigma_1$ either lie outside of $PSL_2(\mathbb{F}_{q^e})$ or all are inside of it 
(which is the case iff the image of $b$ is in it, in which case all the $b_u$, which are conjugates of $b$, are also there). 
The Cayley graph $Cay(\Lambda / \Lambda(I),\Sigma)$ is bipartite in the first case and has large chromatic number in the second. 
In other words, the quotient $\Lambda / \Lambda(I)$ inherits the type function of the building ($\tau : \mathcal{B} \rightarrow \mathbb{Z}/ 2\mathbb{Z}$) 
if and only if the index of $PSL_2(\mathbb{F}_{q^e})$ inside $\Lambda / \Lambda(I)$ is $2$, i.e. $\Lambda / \Lambda(I) = PGL_2(\mathbb{F}_{q^e})$.

In the high-dimensional case the situation is in analogy with the 1-dimensional case (see  \cite[Proposition~6.7,~Corollary~6.8]{LSV2}).
Assume $I=(f)$ as before with $R/I=\mathbb{F}_{q^e}$. Then $PGL_d(\mathbb{F}_{q^e})/PSL_d(\mathbb{F}_{q^e}) \cong \mathbb{Z}/d\mathbb{Z}$.
If $r$ is the index of $PSL_d(R/I)$ in $\Lambda / \Lambda(I)$ then $r|d$, and the image of $\Lambda$ in $\mathbb{Z}/d\mathbb{Z}$ under the map $\tau$ is 
$\frac{d}{r}\mathbb{Z}/d\mathbb{Z} \cong \mathbb{Z}/r\mathbb{Z}$.
The quotient then inherits an $r$-partition from the building, i.e. $\tau : \Lambda / \Lambda(I) \rightarrow  \mathbb{Z}/ r\mathbb{Z}$. 
In the construction above, the index $r = [\Lambda / \Lambda(I) : PSL_d(R/I)]$ is equal to the order of $\frac{y}{1+y}$ inside $(R/I)^*/(R/I)^{*d}$ 
(see \cite[Proposition~6.7]{LSV2}).

\begin{lem} \label{cover}
Let $0 \ne I \lhd R$ and $\Gamma = \Lambda(I)$ as above, $r=[\Lambda /\Gamma : PSL_d(R/I)]$, and consider the simplicial complex $\Gamma \backslash \mathcal{B}$.
\begin{itemize}
\item[(a)] Denote $\Gamma_0 = \Gamma \cap G_0 = \{g \in \Gamma \mid \nu_F(\det(g))  \equiv 0 \mbox{ mod }d\}$ the subgroup of type-preserving elements of $\Gamma$.
Then $[\Gamma : \Gamma_0] = \frac{d}{r}$.
\item[(b)] If $r=1$, then $\Gamma_0 \backslash \mathcal{B} \rightarrow \Gamma \backslash \mathcal{B}$ is a $d$-cover.
Moreover, for each vertex in $\Gamma \backslash \mathcal{B}$, its preimage is a set of $d$ vertices, one of each type in $\mathbb{Z}/d\mathbb{Z}$.
\end{itemize}
\end{lem}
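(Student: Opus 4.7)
My plan uses the type homomorphism $\sigma := \nu_F\circ\det : \Lambda \to \mathbb{Z}/d\mathbb{Z}$, which is surjective because $\Lambda$ acts transitively on the vertices of $\mathcal{B}$ and all $d$ types occur. The kernel of $\sigma$ is exactly $\Lambda \cap G_0$, so by the second isomorphism theorem $\Gamma/\Gamma_0 \cong \Gamma/(\Gamma\cap\ker\sigma) \cong \sigma(\Gamma) =: H \leq \mathbb{Z}/d\mathbb{Z}$, and part (a) reduces to showing $|H| = d/r$.

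The key concrete input is the reduced-norm computation $\mathrm{Nrd}(b) = y/(1+y) \in R^*$, which follows from $z^d = 1+y$ and $b = 1 - z^{-1}$ by reading off the constant coefficients of the degree-$d$ minimal polynomials of $z$ and $z-1$ in $\mathcal{A}(k)$. Conjugation preserves the reduced norm, so every generator $b_u$ has the same reduced norm; consequently, for any $\gamma \in \Lambda$ expressed as a word of signed length $n$ in the $b_u$'s, a lift $\tilde\gamma \in \mathcal{A}(R)^*$ satisfies
\[
\mathrm{Nrd}(\tilde\gamma) \,\equiv\, \left(\tfrac{y}{1+y}\right)^n \pmod{R^{*d}}, \qquad \sigma(\gamma) \,=\, n \bmod d.
\]
Reducing modulo $I$, the element $\bar\gamma \in \Lambda/\Gamma$ lies in $PSL_d(R/I)$ iff $\mathrm{Nrd}(\tilde\gamma)\bmod I$ is a $d$-th power in $(R/I)^*$, iff $r \mid n$, where $r$ is precisely the order of $\overline{y/(1+y)}$ in $(R/I)^*/(R/I)^{*d}$ by \cite[Proposition~6.7]{LSV2}. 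For $\gamma \in \Gamma$ we have $\bar\gamma = 1$, so $\sigma(\gamma) \in r\mathbb{Z}/d\mathbb{Z}$, and hence $H \subseteq r\mathbb{Z}/d\mathbb{Z}$, giving $|H| \leq d/r$. The reverse bound comes from abelianization: the induced homomorphism $\bar\sigma : \Lambda/\Gamma \to (\mathbb{Z}/d\mathbb{Z})/H$ is abelian-valued and hence factors through $(\Lambda/\Gamma)^{\mathrm{ab}}$; the short exact sequence $1 \to PSL_d(R/I) \to \Lambda/\Gamma \to Q \to 1$ with $|Q|=r$ and $PSL_d(R/I)$ perfect identifies $(\Lambda/\Gamma)^{\mathrm{ab}}$ with $Q$, and the resulting surjection $Q \twoheadrightarrow (\mathbb{Z}/d\mathbb{Z})/H$ forces $d/|H| \leq r$, yielding $|H| = d/r$.

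For part (b), when $r = 1$ part (a) gives $[\Gamma : \Gamma_0] = d$, and normality of $G_0$ in $G$ makes $\Gamma_0$ normal in $\Gamma$, so $\pi : \Gamma_0 \backslash \mathcal{B} \to \Gamma \backslash \mathcal{B}$ is a regular covering of simplicial complexes with deck group $\Gamma/\Gamma_0 \cong \mathbb{Z}/d\mathbb{Z}$. The injectivity-radius hypothesis ($\geq 2$, in force throughout the paper's applications) ensures $\Gamma$ acts freely on $\mathcal{B}$, so each fiber of $\pi$ is a free orbit of the deck group and has size exactly $d$. Finally, the identity $\tau(\gamma x) = \tau(x) + \sigma(\gamma)$ combined with $\sigma(\Gamma) = \mathbb{Z}/d\mathbb{Z}$ (the $r=1$ case of (a)) shows that as $\gamma$ ranges over coset representatives of $\Gamma_0$ in $\Gamma$, the values $\tau(\gamma x)$ traverse each residue in $\mathbb{Z}/d\mathbb{Z}$ exactly once.

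The main obstacle is establishing $|H| = d/r$ in part (a): the upper bound $H \subseteq r\mathbb{Z}/d\mathbb{Z}$ drops out of the explicit computation $\mathrm{Nrd}(b) = y/(1+y)$, but the matching lower bound requires the perfectness of $PSL_d(R/I)$ to pin down the abelianization $(\Lambda/\Gamma)^{\mathrm{ab}} \cong Q$. Once that step is in place, the rest --- including all of part (b) --- is bookkeeping with index formulas and the action of deck transformations on fibers.
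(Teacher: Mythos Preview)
Your argument is correct and follows the same skeleton as the paper's: both identify $\Gamma/\Gamma_0$ with the image $H=\tau(\Gamma)$ of the type homomorphism $\tau=\nu_F\circ\det$, and both obtain the bound $|H|\geq d/r$ from the fact that the induced surjection $\Lambda/\Gamma\twoheadrightarrow(\mathbb{Z}/d\mathbb{Z})/H$ is abelian-valued and hence factors through $(\Lambda/\Gamma)^{\mathrm{ab}}\cong Q$, using perfectness of $PSL_d(R/I)$. The one substantive difference is in the reverse inclusion $H\subseteq r\mathbb{Z}/d\mathbb{Z}$: the paper's proof simply asserts it (``it follows that by restricting the homomorphism\dots''), tacitly relying on the discussion before the lemma and on \cite[Proposition~6.7, Corollary~6.8]{LSV2}, whereas you supply a self-contained derivation via the explicit computation $\mathrm{Nrd}(b)=y/(1+y)$ together with $\sigma(b_u)=1$. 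This is a genuine addition---that inclusion does \emph{not} follow from the abstract group-theoretic structure of $\Lambda/\Gamma$ alone, so your reduced-norm argument (or the cited \cite{LSV2} result) is needed. Part (b) is argued essentially identically in both proofs, with your version making explicit the deck-group/free-action language that the paper leaves implicit.
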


\begin{proof}
Let us look at the type function as a surjective homomorphism of groups 
\[\tau = \nu_F \circ \det : \Lambda \cong \mathcal{B}^{(0)} \rightarrow \mathbb{Z}/d\mathbb{Z}  \]
If $\Lambda / \Gamma$ is an extension of the simple group $PSL_d(q^e)$ by a cyclic group of order $r$, it follows that by restricting the homomorphism, 
we get a surjective homomorphism, $\tau_{\Gamma} : \Gamma \rightarrow \mathbb{Z}/\frac{d}{r}\mathbb{Z} \cong r\mathbb{Z}/d\mathbb{Z}$.
Now, since $G_0$ is the subgroup of type-preserving elements in $G$, then $\ker(\tau|_X) = X \cap G_0$.
So by the First Isomorphism Theorem we have 
\[\Gamma / \Gamma_0 = \Gamma / \ker(\tau_{\Gamma}) \cong \tau(\Gamma) \cong \mathbb{Z}/\frac{d}{r}\mathbb{Z}.\]
This proves (a). To prove (b) we argue as follows:

Since $r=1$ then by (a) we have that $[\Gamma : \Gamma_0] = d$. Let $\gamma_1,\ldots,\gamma_d$ be representatives of $\Gamma_0$-cosets.
Since $\Gamma_0 = \ker(\nu_F \circ \det|_{\Gamma})$ all the $d$ types are obtained, then after renumbering, for each $i \in \mathbb{Z}/d\mathbb{Z}$, 
$\nu_F \circ \det(\gamma_i) = i$.
Also, for any $\Gamma x \in \Gamma \backslash \mathcal{B}$, its preimages are $\Gamma_0 \gamma_1 x,\ldots,\Gamma_0 \gamma_d x$,
and their types are $1+\tau(x),\ldots,d+\tau(x)$, which give all $d$ types in $\mathbb{Z}/d\mathbb{Z}$. 
\end{proof}

When $r=1$, we say that $\Gamma \backslash \mathcal{B}$ is \textit{non-partite}. In order to find such Ramanujan complexes we proceed as follows. 
Choose some $\beta \in \mathbb{F}_{q^e}^*$ such that $\beta^d \ne 1$ and that $\alpha = \frac{\beta^d}{1 - \beta^d}$ generates the field $\mathbb{F}_{q^e}$. 
By Lemma 7.2 and the proof of Proposition 7.3 in \cite{LSV2}, if $q^e \geq 4d^2+1$ there exists such $\beta$. 
Now, let $f \in \mathbb{F}_q[y]$ be the minimal polynomial of $\alpha$. Then $f$ is of degree $e$, and under the identification $R/(f) \cong \mathbb{F}_{q^e}$,
$y \longleftrightarrow  \alpha = \frac{\beta^d}{1 - \beta^d}$ and $\frac{y}{1 + y} \longleftrightarrow  \beta^d $.
Therefore $\frac{y}{1 + y} \in (R/I)^{*d}$, and by the discussion above $\Lambda / \Gamma = PSL_d(\mathbb{F}_{q^e})$ and the Cayley complex of $\Lambda / \Gamma$ is non-partite.

The above may be summarized by the following proposition, which is a special case of \cite[Theorem~7.1]{LSV2} together with Lemma \ref{cover}.
\begin{pro} \label{ram}
Let $q$ be a prime power, $d \geq 2$, $e \geq 2$ such that $q^e \geq 4d^2+1$.
Let $\Lambda$ be the Cartwright-Steger lattice in $PGL_d(\mathbb{F}_q((y)))$. 
For an irreducible polynomial $f \in \mathbb{F}_q[y]$, let $\Gamma = \Lambda(f) \lhd \Lambda$ be its congruence subgroup,
and let $\Gamma_0 = \Gamma \cap G_0$ be the finite index subgroup of type-preserving elements in $\Gamma$.

Then there exists an irreducible polynomial $f \in \mathbb{F}_q[y]$ of degree $e$,
such that:\\
1. The Cayley complex $X = Cay(\Lambda / \Gamma, \Sigma)$ is a non-partite Ramanujan complex.\\
2. The Cayley complex $\tilde{X} = Cay(\Lambda / \Gamma_0, \Sigma)$ is a $d$-partite Ramanujan complex.\\
3. The complex $\tilde{X}$ is a $d$-cover of $X$, and the preimage in $\tilde{X}$ of each vertex in $X$ is a set of $d$ vertices of all $d$ types.
\end{pro}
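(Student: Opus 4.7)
My plan is to construct the polynomial $f$ explicitly by following the recipe sketched in the paragraph preceding the proposition. The first step is to invoke \cite[Lemma~7.2]{LSV2}, which under the hypothesis $q^e \geq 4d^2+1$ produces an element $\beta \in \mathbb{F}_{q^e}^*$ satisfying $\beta^d \neq 1$ such that $\alpha := \beta^d/(1-\beta^d)$ is a primitive generator of $\mathbb{F}_{q^e}$ over $\mathbb{F}_q$. I then take $f \in \mathbb{F}_q[y]$ to be the minimal polynomial of $\alpha$; this polynomial automatically has degree $e$ and yields $R/(f) \cong \mathbb{F}_{q^e}$ via $y \mapsto \alpha$.

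The key algebraic verification is that the index $r = [\Lambda/\Gamma : PSL_d(R/(f))]$ equals $1$. Under the above identification I would compute $1+\alpha = 1/(1-\beta^d)$ and hence $y/(1+y) \mapsto \alpha/(1+\alpha) = \beta^d$, which manifestly lies in $(R/(f))^{*d}$. Thus $y/(1+y)$ has trivial image in the cyclic group $(R/(f))^*/(R/(f))^{*d}$, and \cite[Proposition~6.7]{LSV2} yields $r=1$, i.e. $\Lambda/\Gamma \cong PSL_d(\mathbb{F}_{q^e})$. Given this, part (1) is immediate: the Ramanujan property of $X = \Gamma \backslash \mathcal{B}$ is \cite[Theorem~6.2]{LSV2} applied to $\Gamma = \Lambda(f)$, and the non-partite structure is the $r=1$ case of the discussion preceding the proposition. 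Part (3) is then precisely Lemma~\ref{cover}(b), whose hypothesis $r=1$ has just been verified.

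For (2), the $d$-partite structure on $\tilde X = \Gamma_0 \backslash \mathcal{B}$ is immediate from $\Gamma_0 \subset G_0$ together with the type function on $\mathcal{B}$. The Ramanujan property will follow by transferring the spectral information from $X$: since $\tilde X \to X$ is a normal cyclic $d$-cover (this is part (a) of Lemma~\ref{cover} together with $r=1$), the space $L^2(\Gamma_0 \backslash G)$ decomposes under the deck action of $\Gamma/\Gamma_0 \cong \mathbb{Z}/d\mathbb{Z}$ as a direct sum of character-twisted copies, each of which is a subspace of a congruence-type $L^2$-space controlled by the same representation-theoretic input that underlies \cite[Theorem~6.2]{LSV2}. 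I expect this last step to be the main obstacle: verifying that passage from the honest congruence subgroup $\Gamma$ to its finite-index type-preserving subgroup $\Gamma_0$ does not degrade the Ramanujan bound. Since however $\Gamma_0$ is cut out of $\Gamma$ by a congruence-type condition on the determinant valuation, and the irreducible automorphic representations contributing to its spectrum are the same ones appearing in $L^2(\Gamma\backslash G)$, the transfer should be routine and no new analytic input beyond \cite[Theorem~6.2]{LSV2} is required.
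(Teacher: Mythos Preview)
Your approach matches the paper's exactly: the paper does not give a separate proof of this proposition but presents it as a summary of the preceding paragraph (the construction of $f$ via $\beta$ and $\alpha$, and the verification that $y/(1+y)\mapsto\beta^d\in (R/(f))^{*d}$ forces $r=1$) together with \cite[Theorem~7.1]{LSV2} and Lemma~\ref{cover}. One small correction to your sketch for part~(2): the twisted components of $L^2(\Gamma_0\backslash G)$ under the deck action of $\Gamma/\Gamma_0$ are not literally ``the same'' irreducible representations as those in $L^2(\Gamma\backslash G)$ --- the nontrivial characters produce genuinely new automorphic constituents --- but they are still automorphic for the same inner form of $GL_d$ and hence controlled by the identical Lafforgue/Jacquet--Langlands input underlying \cite{LSV1,LSV2}, so your conclusion stands.
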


\begin{rem}
A polynomial $f(y)$ of degree $e$ can be also considered as a polynomial of degree $e$ in $\frac{1}{y}$, 
by multiplying it by $\frac{1}{y^e}$, which is an invertible element in $R$.
\end{rem}


\section{Proof of the main theorem}\label{sec:Proof of the main theorem}
We are now ready to prove the following theorem which implies Theorem \ref{chromatic-bound}.

\begin{thm} \label{main}
Let $d \geq 3 $ and $q$ an odd prime power, and let $X$ be a non-partite Ramanujan complex as constructed in Proposition \ref{ram}.
Let $\chi(X)$ and $r(X)$ be the chromatic number and injectivity radius of $X$ (as defined in the introduction).
Then 
\begin{equation}
r(X) \geq \frac{\log_q|X|}{2(d-1)(d^2-1)} - \frac{1}{2}
\end{equation}
and, assuming $r(X) \geq 2$, 
\begin{equation}
\chi(X) \geq \frac{1}{2} \cdot q^{1/2d}
\end{equation}
\end{thm}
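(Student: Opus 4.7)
The plan is to attack the two bounds separately, with the injectivity radius bound being a direct citation and the chromatic number bound being the main substance.

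For the injectivity radius, since $X = \Lambda(f)\backslash \mathcal{B}$ is a quotient by the congruence subgroup of the Cartwright--Steger lattice, the claimed bound $r(X) \geq \frac{\log_q |X|}{2(d-1)(d^2-1)} - \frac{1}{2}$ is precisely Corollary \ref{inj-rad}, with no additional work required.

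For the chromatic number, the strategy is to transfer the coloring problem to the $d$-sheeted $d$-partite cover $\tilde{X} = \Gamma_0 \backslash \mathcal{B}$ supplied by Proposition \ref{ram}, and to apply the Colorful Mixing Lemma there. Since $\Gamma_0 \subseteq \Gamma$, we have $dist(\Gamma_0) \geq dist(\Gamma)$, so $r(\tilde{X}) \geq r(X) \geq 2$, and Theorem \ref{colormix} applies to $\tilde{X}$. Let $c = \chi(X)$ and let $\chi_X: V(X) \to \{1, \ldots, c\}$ be a proper coloring. Pulling back along the covering map $\pi: \tilde{X} \to X$ gives a coloring $\chi_{\tilde{X}} := \chi_X \circ \pi$ of $\tilde{X}$, which remains proper since every facet of $\tilde{X}$ projects to a facet of $X$. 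The crucial symmetry, from Lemma \ref{cover}(b), is that each fiber $\pi^{-1}(v)$ meets every type class $V_i$ of $\tilde{X}$ in exactly one vertex; equivalently, the deck group $\Gamma/\Gamma_0 \cong \mathbb{Z}/d\mathbb{Z}$ cyclically permutes types and preserves the lifted color classes. Consequently, writing $W^{(j)} = \chi_{\tilde{X}}^{-1}(j)$ and $W^{(j)}_i = W^{(j)} \cap V_i$, the number $w^{(j)} := |W^{(j)}_i| = |\chi_X^{-1}(j)|$ is independent of the type $i$, and $n := |V_i| = |V(X)|$ is also independent of $i$, with $\sum_j w^{(j)} = n$.

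Applying the Colorful Mixing Lemma to the sets $W_1^{(j)}, \ldots, W_d^{(j)}$, and using that $\chi_{\tilde{X}}$ is proper so that $|E(W_1^{(j)}, \ldots, W_d^{(j)})| = 0$, we obtain
\[
\left(\frac{w^{(j)}}{n}\right)^{\!d} \;=\; \prod_{i=1}^d \frac{|W_i^{(j)}|}{|V_i|} \;\leq\; \frac{2d}{q^{1/2}},
\qquad\text{i.e.,}\qquad \frac{w^{(j)}}{n} \;\leq\; \frac{(2d)^{1/d}}{q^{1/(2d)}}.
\]
Summing over the $c$ colors and using $\sum_j w^{(j)}/n = 1$ yields
\[
1 \;\leq\; c \cdot \frac{(2d)^{1/d}}{q^{1/(2d)}},
\qquad\text{hence}\qquad c \;\geq\; \frac{q^{1/(2d)}}{(2d)^{1/d}}.
\]
Finally, for $d \geq 3$ one verifies by easy induction that $2d \leq 2^d$, hence $(2d)^{1/d} \leq 2$, giving $\chi(X) \geq \tfrac{1}{2}\, q^{1/(2d)}$ as claimed.

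The main conceptual obstacle is Step~3 above: exploiting the symmetry of the cover to conclude that the lifted color class has the same size in each type class. Without this symmetry, the pointwise bound $\prod_i |W_i^{(j)}|/|V_i| \leq 2d/q^{1/2}$ would not imply any nontrivial lower bound on the number $c$ of colors (one can make individual products vanish by concentrating mass on a single type). This is precisely why the proof requires working with the \emph{non-partite} Cartwright--Steger quotient $X$, whose canonical partite cover $\tilde{X}$ automatically produces the symmetric distribution of colors across types upon pullback.
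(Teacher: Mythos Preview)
Your proof is correct and follows essentially the same approach as the paper: lift to the $d$-partite cover $\tilde{X}$, use the fiber symmetry from Lemma~\ref{cover}(b) to get equal-sized color classes in each type, and apply the Colorful Mixing Lemma to a monochromatic set to force $\chi(X)\geq (2d)^{-1/d}q^{1/(2d)}$. The only cosmetic difference is that the paper applies the mixing lemma once to the \emph{largest} color class (pigeonhole gives $|W|/|V(X)|\geq 1/\chi(X)$ directly), whereas you apply it to every color class and sum; both routes yield the identical bound.
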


\begin{proof}
The claim about the injectivity radius is Corollary \ref{inj-rad}. For the chromatic number:
Consider a coloring of $X$ with $\chi(X)$ colors, and let $W$ be the set of vertices of $X$ of the most common color, so $|W| \geq \frac{|V(X)|}{\chi(X)}$.
Let $\widetilde{X}$ be the $d$-cover of $X$ with the type-function inherited from building, as in Proposition \ref{ram}. 
For each $i \in \mathbb{Z}/d\mathbb{Z}$, let $W_i$ be the preimage of $W$ of vertices of type $i$ in $\widetilde{X}$, so $|W_i|=|W|$.

Note that the image of each $j$-dimensional simplex $\tilde{e}$ in $\widetilde{X}$ is again an $j$-dimensional simplex $e$ in $X$. 
Indeed, as the injectivity radius is $\geq 2$ and each simplex is a clique, so any two vertices in $\tilde{e}$ are of distance $1$, 
hence are not mapped to the same vertex in $X$.

In particular, each $d$-dimensional simplex in $\widetilde{X}$, with one vertex in each $W_i$, is mapped to a $d$-dimensional simplex in $X$, with all the vertices in $W$.
But by the definition of the chromatic number there are no such simplices in $X$ with all vertices in $W$, and therefore  $E(W_1,\ldots,W_d) = \emptyset$.

Denote by $V_i$ the set of vertices of type $i$ in $\widetilde{X}$, then $|V_i| = |V(X)|$ for all $i=1,\dots,d$. Therefore $\frac{|W_i|}{|V_i|} = \frac{|W|}{|V(X)|} \geq \frac{1}{\chi(X)}$.  Since $E(W_1,\ldots,W_d) = \emptyset$, we get 
$$disc_{\widetilde{X}}(W_1,\ldots,W_d) = \prod_{i=1}^d \frac{|W_i|}{|V_i|} \geq \frac{1}{\chi(X)^d}.$$
On the other hand, by the Colorful Mixing Lemma, we have 
$$disc_{\widetilde{X}}(W_1,\ldots,W_d) \leq \frac{2d}{q^{1/2}}.$$
Combining these together we get 
$$\chi(X) \geq  (2d)^{-\frac{1}{d}} \cdot q^{\frac{1}{2d}} \geq \frac{1}{2} \cdot q^{\frac{1}{2d}}.$$
\end{proof}

\begin{rem} \label{diameter}
The complexes in Theorem \ref{main} are non-partite. It follows therefore that for their $1$-skeletons, the largest eigenvalue of their adjacency matrices  is 
$k \approx q^{d^2/4}$ where the second one $\lambda_2$ is at most $d^d q^{d^2/8}$ (see \cite[remark~2.1.5]{Lu2}).
It follows from \cite[Theorem~1]{C} that $diam(X) \leq \frac{\log|X|}{\log(\lambda_1/\lambda_2)}$, so
$$diam(X_n) \leq \frac{\log_q |X_n|}{\log_q(\frac{k}{d^d q^{d^2/8}})} \approx \frac{\log_q |X_n|}{d^2/4 - d\log_q d} \approx \frac{4 \log_q |X_n|}{d^2} \leq 8d \cdot r(X_n)$$ 
for $q \gg d$. So, up to a constant fraction of their diameters, these complexes are two colorable around every vertex.
\end{rem}

\begin{rem}
Theorems \ref{main} and \ref{chromatic-bound} are true also if $q$ is even or if $d=2$. 
In this case one should use the full power of the Ramanujan bounds. For the cases we treated here, Oh's theorem suffices.
\end{rem}


\end{document}